\documentclass[12pt]{article}
\usepackage{times}
\usepackage{hyperref}

\usepackage{latexsym}
\usepackage{amsmath}
\usepackage[left=1in,top=1in,right=1in]{geometry}
\usepackage{amssymb,amsmath}
\usepackage{amsthm}
\usepackage{amsmath}
\usepackage{amsfonts}
\usepackage{titlesec}
\titlelabel{\thetitle.\quad}
\newtheorem{definition}{Definition}[section]
\newtheorem{corollary}{Corollary}[section]
\newtheorem{theorem}{Theorem}[section]
\newtheorem*{keywords*}{Keywords}

\newtheorem{proposition}{Proposition}[section]

\newcommand{\GQE}{generalized quasi-Einstein} 
\newcommand{\NCPM}{normal metric contact pair manifold}
\title{\textbf{ {\Large  Generalized Quasi-Einstein Manifolds in Contact Geometry} }}
\author{\.Inan \"Unal\\ Department of Computer Engineering\\Munzur University, Tunceli, Turkey\\ inanunal@munzur.edu.tr}

\date{ }

\begin{document}

\maketitle

\begin{abstract}
In this study, we investigate generalized quasi-Einstein structure for \NCPM s.  Firstly, we deal with elementary properties and examine, existence, and characterizations of \GQE \ \NCPM. Secondly, the generalized quasi-constant curvature of \NCPM s are studied and it is proven that a \NCPM \  with generalized quasi-constant curvature is a \GQE \ manifold. Normal metric contact pair manifolds satisfying cyclic Ricci tensor and the Codazzi type of Ricci tensor are considered and its shown that a  \GQE\ \NCPM \ does not satisfy Codazzi type of Ricci tensor. Finally, we work on \NCPM s \ satisfying certain curvature conditions related to $ \mathcal{M}- $projective, conformal, and concircular curvature tensors. We show that a \NCPM\ with generalized quasi-constant curvature is  locally isometric to the Hopf manifold $  S^{2n+1}(1) \times S^1 $.
\end{abstract}
\begin{keywords*}
	\GQE,\ Contact pairs, generalized quasi-constant curvature, \\$ \mathcal{M}- $projevtice curvature tensor 
\end{keywords*}
\section{Introduction}
\quad An Einstein manifold, which comes from the solution of Einstein fields equation, is defined as Ricci tensor $ Ric $ of type $ (0, 2) $ is non-zero and proportional to the metric tensor. Since Einstein manifolds have important differential geometric properties as well as their physical applications, they are studied by geometers in a wide perspective. A generalization of Einstein manifolds is quasi-Einstein manifolds. A Riemannian manifold $ M $ is called quasi-Einstein if the Ricci curvature tensor has the following form: 
\begin{equation*}
Ric(X_1,X_2)=\lambda g(X,Y)+\beta\omega(X_1)\omega(X_2)
\end{equation*}
for all $ X_1,X_2 \in \Gamma(TM) $, where $ \lambda, \beta $ are scalars and $ \omega $ is a $ 1- $form. In contact geometry $ \eta- $Einstein manifolds could be seen as a special case of quasi-Einstein manifolds. Quasi-Einstein manifolds arose during the study of exact solutions of the
Einstein field equations as well as during considerations of quasi-umbilical hypersurfaces. For instance, the Robertson-Walker space-times are quasi-Einstein manifolds. For more details about quasi-Einstein manifolds see \cite{de2004quasi,ghosh2006certain,tripathi2007n}. 
  \par
  The generalization of quasi-Einstein manifolds has been presented in different perspectives. One of them was given by Chaki in \cite{chaki2001generalized} and another was presented by Catino \cite{catino2012generalized} with to generalize the concepts of the Ricci soliton, the Ricci almost soliton and quasi-Einstein manifolds. The third definition of \GQE\ manifolds were given by De and Ghosh \cite{de2004generalized} as follow,
  \begin{equation*}
  Ric(X_1,X_2)=\lambda g(X,Y)+\beta\omega(X_1)\omega(X_2)+\mu\eta(X_1)\eta(X_2)
  \end{equation*}
  where $ \omega, \mu $ are two non-zero $ 1- $ forms and $   \lambda, \beta$
 are certain non-zero scalars \cite{chaki2000quasi}. The unit and orthogonal vector fields $ \xi_1 $ and $ \xi_2 $ corresponding to the $ 1- $forms $ \omega  $ and $\eta $   are defined by
  $ g(X_1,\xi_1)=\omega(X_1), \ g(X_2,\xi_2)=\eta(X_1) $, respectively  \cite{de2004generalized}. The geometric properties of \GQE \ manifolds have been studied in \cite{de2004generalized,sular2012characterizations,de2011existence,prakasha2014some,de2015generalized}.
  A \GQE\ manifold , in addition to its geometrical features, it also has physical applications in general relativity \cite{mallick2016class,guler2016study,de2019generalized}. Complex $ \eta-$Einstein manifolds are a special case of \GQE\ manifolds (see \cite{turgutvanlicomplexeta2017}). \par 
  
  In \cite{bande2005contact} Bande and Hadjar defined a new contact structure on a differentiable $ (m=2p+2q+2)-$dimensional manifold $ M $ with two $ 1- $forms $ \alpha_1,\alpha_2 $.  This structure was firstly studied by Blair, Ludden and Yano \cite{blair1974geometry} as the name of bicontact manifolds. Bande and Hadjar considered a special type of $ f- $structure with complementary frames related to these contact forms and they obtained associated metric. A differentiable manifold with this structure is called metric contact pair (MCP) manifold . Riemannian geometry of MCP manifolds is given in \cite{bande2010normal,bande2009contact}. \par
This paper is on applications of \GQE\ manifolds in contact geometry. We consider the \NCPM\ admits \GQE\ structure and call this type of manifold as \GQE\ \NCPM s. After presenting definitions and basic properties, we examine the existence of such manifolds. Also, we present a characterization of \GQE \ \NCPM s.\ Moreover, we consider the notion of generalized quasi-constant curvature for \NCPM s and we obtain some results on the sectional curvature.  We investigate a generalized quasi-Einstein normal metric contact pair manifold under some conditions for Ricci tensor.
Finally, we study on $ M-$projective curvature tensor, conformal curvature tensor and concircular curvature tensor on a \NCPM\ and we gave some results under flatness and symmetry conditions.

\section{Preliminaries}
\quad Contact pairs were defined by Bande and Hadjar \cite{bande2005contact} in 2005, for details see \cite{bande2005contact,bande2009contact,bande2010normal}. In this section we give some fundamental facts about contact metric pair manifolds.  

\begin{definition}
	A pair of two $  1-$forms, $ (\alpha_1,\alpha_2) $ on an $ (m=2p+2q+2) $ -dimensional differentiable manifold $ M $, is called contact pair of type $ (p,q) $ if we have
	\begin{itemize}
		\item $ \alpha_1 \wedge (d\alpha_1)^p\wedge\alpha_2 \wedge (d\alpha_2)^q\neq0 $,
		\item  $ (d\alpha_1)^{p+1}=0 $ and $ (d\alpha_2)^{q+1}=0, $
	\end{itemize}
	where $ p,q $ are positive integers \cite{bande2005contact}. 
\end{definition}
For $ 1- $forms $ \alpha_1 $ and $ \alpha_2 $, we have two integrable subbundle of $ TM $; $ \mathcal{D}_{1}=\{X:\alpha_1(X)=0, X\in \Gamma(TM)\} $ and  $ \mathcal{D}_{2}=\{X:\alpha_2(X)=0, X\in \Gamma(TM)\} $. Then we have two characteristic foliations of $ M $, denoted $ \mathcal{F}_1=\mathcal{D}_1 \cap kerd\alpha_1$ and $ \mathcal{F}_2= \mathcal{D}_2 \cap kerd\alpha_2$ respectively. $ \mathcal{F}_1 $ and $ \mathcal{F}_2 $ are $( 2p+1) $ and $ (2q+1)-$dimensional contact manifolds with contact form induced by $ \alpha_2 $ and $ \alpha_1 $. Thus we can define $ (2p+2q)- $dimensional horizontal subbundle $ \mathcal{H}=ker\alpha_1\cap ker\alpha_2$.
To a contact pair $ (\alpha_1,\alpha_2) $ of type $ (p,q) $ there are associated two commuting vector fields $ Z_1 $ and $ Z_2 $, called Reeb vector fields of the pair, which are uniquely determined by the following equations:
\begin{eqnarray*}
	&\alpha_1(Z_1)=\alpha_2(Z_2)=1, \ \alpha_1(Z_2)=\alpha_2(Z_1)=0\\&
	i_{Z_1}d\alpha_1 =i_{Z_1}d\alpha_2=i_{Z_2}d\alpha_2=0
\end{eqnarray*}
where $ i_X $ is the contraction with the vector field X. In particular, since the Reeb vector fields
commute, they determine a locally free $ \mathbb{R}^2 $-action, called the Reeb action. \par 
The tangent bundle of $ (M,(\alpha_1,\alpha_2)) $ can be split into as follow. For the two subbundle of $ TM $
\begin{eqnarray*}
	T\mathcal{G}_i=ker d\alpha_i\cap ker \alpha_1 \cap ker \alpha_2, \ i=1,2
\end{eqnarray*}
and we can write
\begin{equation*}
T\mathcal{F}_i=T\mathcal{G}_i\oplus \mathbb{R}Z_1.
\end{equation*}
Therefore we get $ TM=T\mathcal{G}_1\oplus T\mathcal{G}_2\oplus \mathbb{R}Z_1\oplus \mathbb{R}Z_2$. The horizontal subbundle can be written as  $ \mathcal{H}=T\mathcal{G}_1\oplus T\mathcal{G}_2 $. Also, we write  $ \mathcal{V}= \mathbb{R}Z_1\oplus \mathbb{R}Z_2$, we call $ \mathcal{V} $ is  vertical subbundle of $ TM. $ \par
Let $ X $ be an arbitrary vector field on $ M $. We can write $ X=X^{\mathcal{H}}+X^{\mathcal{V}} $, where $ X^{\mathcal{H}}, X^{\mathcal{V}} $ horizontal and vertical component of $ X $ respectively. We call a vector field $ X $ as horizontal vector field if $ X \in \Gamma(\mathcal{H})$, and vertical vector field if $ X \in \Gamma(\mathcal{V})$. \par 
An almost contact structure is related to contact form and it is defined with tensorial viewpoint. Many geometric properties of contact manifolds examined by this contact structure. Similarly an almost contact pair structure has been defined as follow by Bande and Hadjar\cite{bande2009contact}. 
\begin{definition}
	An almost contact pair structure on a $ (m=2p+2q+2)- $dimensional differentiable manifold $ M $ is a triple $ \alpha_1, \alpha_2, \phi $, where $ (\alpha_1, \alpha_2) $ is a contact pair and $ \phi $ is a $ (1,1) $ tensor field such that:
	\begin{eqnarray}
	\phi^2=-I+\alpha_1\otimes Z_1+\alpha_2\otimes Z_2
	,\ \ \phi Z_1=\phi Z_2=0. 
	\end{eqnarray}
	The rank of $ \phi $ is $ (2p+2q) $ and $ \alpha_i(\phi)=0 $ for $ i=1,2 $ \cite{bande2009contact}. 
\end{definition}
The endomorphism $\phi $ is said to be decomposable, i.e $ \phi=\phi_1+\phi_2 $, if $ T\mathcal{F}_i $ is invariant under $ \phi $. If $ \phi $ is decomposable then  $ (\alpha_i, Z_i, \phi) $ induce an almost contact structure on $ \mathcal{F}_j$ for $ i\neq j$ \cite{bande2005contact}. The decomposability of $ \phi $ do not satisfy for every almost contact pair structure. An example was given in \cite{bande2005contact}, has an almost contact pair structure but $ \phi $ is not decomposable. In this study we assume that $ \phi $ is decomposable. 
\begin{definition}
	Let $ (\alpha_1,\alpha_2,Z_1,Z_2,\phi) $ be an almost contact pair structure on a manifold $ M $. A Riemannian metric $ g $ is called 
	\begin{enumerate}
		\item [$ \bullet $] compatible if $ g(\phi X_1,\phi X_2)=g(X_1,X_2)-\alpha_1(X_1)\alpha_1(X_2)-\alpha_2(X_1)\alpha_2(X_2) $ for all $ X_1,X_2 \in TM $,
		\item [$ \bullet $] associated if $ g(X_1,\phi X_2)=(d\alpha_1+d\alpha_2)(X_1,X_2) $ and $ g(X_1,Z_i)=\alpha_i(X_1) ,$ for $ i=1,2 $ and for all $ X_1,X_2 \in \Gamma(TM) $
	\end{enumerate}
\end{definition}
\noindent
$ 4- $tuple $ (\alpha_1,\alpha_2,\phi, g) $ is called a metric almost contact pair on a manifold $ M $ and g is an associated metric with respect to contact pair structure $ (\alpha_1,\alpha_2,\phi) $. We recall $ (M,\phi,Z_1,Z_2,\alpha_1,\alpha_2,g) $ is a metric almost contact pair manifold.\par
We have following properties for a metric almost contact pair manifold $ M $ \cite{bande2005contact}: 
\begin{eqnarray}
&g(Z_i,X)=\alpha_i(X), \ \  g(Z_i,Z_j)=\delta_{ij} \\& \nabla_{Z_i}Z_j=0, \ \nabla_{Z_i}\phi=0
\end{eqnarray}
and for every $ X $ tangent to $ \mathcal{F}_i \,\ i=1,2$ we have
\begin{equation*}
\nabla_ X Z_1=-\phi_1  X , \ \ \nabla_ X Z_2=-\phi_2  X 
\end{equation*}
where $ \phi=\phi_1+\phi_2 $. Normality of metric almost contact pair manifold is given by Bande and Hadjar \cite{bande2010normal}. For a metric almost contact pair manifold, we have two almost complex structure: 
\begin{equation*}
J=\phi-\alpha_2\otimes Z_1+\alpha_1\otimes Z_2,\ \ T=\phi+\alpha_2\otimes Z_1-\alpha_1\otimes Z_2.
\end{equation*}
\begin{definition}
	A metric almost contact pair manifold is said to be normal if $ J $ and $ T $ are integrable \cite{bande2010normal}. 
\end{definition}
\begin{theorem}
	Let $ (M,\phi,Z_1,Z_2,\alpha_1,\alpha_2,g) $ be a normal metric contact pair manifold then we have
	\begin{equation}\label{Phicovariantderivation}
	g((\nabla_{X_1}\phi)X_2,X_3)=\sum_{i=1}^{2}(d\alpha_i(\phi X_2,X_1)\alpha_i(X_3)-d\alpha_i(\phi X_3,X_1)\alpha_i(X_2))
	\end{equation}
	where $ X_1,X_2,X_3 $ are arbitrary vector fields on $ M $ \cite{bande2009contact}.
\end{theorem}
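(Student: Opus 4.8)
I would reduce the statement to the covariant-derivative formulas for the two almost complex structures $J$ and $T$ attached to the pair, and then recombine them. First I would record that $(M,g,J)$ and $(M,g,T)$ are almost Hermitian: writing $JX=\phi X-\alpha_2(X)Z_1+\alpha_1(X)Z_2$ and using the compatibility identity $g(\phi X,\phi Y)=g(X,Y)-\alpha_1(X)\alpha_1(Y)-\alpha_2(X)\alpha_2(Y)$ together with $g(\phi X,Z_i)=0$ and $g(Z_i,Z_j)=\delta_{ij}$, a direct check gives $g(JX,JY)=g(X,Y)$, and likewise $g(TX,TY)=g(X,Y)$. Their fundamental $2$-forms are $\Omega_J(X,Y)=g(X,JY)=(d\alpha_1+d\alpha_2)(X,Y)-(\alpha_1\wedge\alpha_2)(X,Y)$ and $\Omega_T=(d\alpha_1+d\alpha_2)+\alpha_1\wedge\alpha_2$, where I used the associated-metric relation $g(X,\phi Y)=(d\alpha_1+d\alpha_2)(X,Y)$. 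By the definition of normality, $J$ and $T$ are integrable, so their Nijenhuis tensors $N_J,N_T$ vanish and both are Hermitian structures.

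Next I would invoke the standard almost-Hermitian identity $2g((\nabla_X J)Y,Z)=d\Omega_J(X,JY,JZ)-d\Omega_J(X,Y,Z)+g(N_J(Y,Z),JX)$, and the analogue for $T$; normality kills the Nijenhuis terms. The key algebraic observation is that $d\Omega_J=-\bigl(d\alpha_1\wedge\alpha_2-\alpha_1\wedge d\alpha_2\bigr)=-d\Omega_T$, since $d(d\alpha_i)=0$. Adding the two identities and using $J+T=2\phi$ (so that $\nabla_X J+\nabla_X T=2\nabla_X\phi$), the two ``$(X,Y,Z)$'' contributions cancel because $d\Omega_J+d\Omega_T=0$, leaving the reduced identity $4g((\nabla_X\phi)Y,Z)=d\Omega_J(X,JY,JZ)+d\Omega_T(X,TY,TZ)$.

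It then remains to expand the right-hand side. Here I would use the Reeb conditions $i_{Z_i}d\alpha_j=0$, which annihilate every term in which a Reeb field is fed into a $d\alpha_j$, together with $\alpha_i\circ\phi=0$ and the contraction rules $\alpha_1\circ J=-\alpha_2,\ \alpha_2\circ J=\alpha_1,\ \alpha_1\circ T=\alpha_2,\ \alpha_2\circ T=-\alpha_1$. Substituting $JY=\phi Y-\alpha_2(Y)Z_1+\alpha_1(Y)Z_2$ and the analogous expression for $T$ into the wedge-product expansions of the two $3$-forms, the purely horizontal terms $d\alpha_i(\phi Y,\phi Z)$ and the mixed terms $\alpha_1(X)\,d\alpha_2(\cdots)$ cancel between the $J$- and $T$-contributions, and the surviving terms reassemble, after using the skew-symmetry of $d\alpha_i$, into $2\sum_{i=1}^{2}\bigl(d\alpha_i(\phi Y,X)\alpha_i(Z)-d\alpha_i(\phi Z,X)\alpha_i(Y)\bigr)$, i.e.\ the asserted right-hand side with $X=X_1$, $Y=X_2$, $Z=X_3$.

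I expect the arithmetic of this last expansion — keeping signs straight across the wedge-product and interior-product manipulations and checking that the spurious terms cancel exactly — to be the only real obstacle. The overall normalizing constant is sensitive to the conventions chosen for $d\alpha_i$ and for the associated metric, so the bookkeeping must be calibrated to the normalization under which $g(X,\phi Y)=(d\alpha_1+d\alpha_2)(X,Y)$ is imposed; this is what pins down the precise coefficient in \eqref{Phicovariantderivation}. A fully self-contained alternative, avoiding the Hermitian identity, is to start from Koszul's formula for $2g(\nabla_X(\phi Y),Z)$ and $2g(\phi\nabla_X Y,Z)$ and substitute the normality condition written directly in terms of $\phi$, $\alpha_i$ and $Z_i$, but that route is more elementary and computationally heavier.
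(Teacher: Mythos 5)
Your proposal is correct, but there is nothing in this paper to compare it against: the theorem is stated without proof and imported verbatim from Bande--Hadjar \cite{bande2009contact}, and the argument in that source is precisely the one you reconstructed --- pass to the two almost complex structures $J$ and $T$, compute $\Omega_J=(d\alpha_1+d\alpha_2)-\alpha_1\wedge\alpha_2$ and $\Omega_T=(d\alpha_1+d\alpha_2)+\alpha_1\wedge\alpha_2$, note $d\Omega_J=-d\Omega_T$, apply the standard almost-Hermitian covariant-derivative identity with the Nijenhuis terms killed by normality, and add the two contributions. So yours is a faithful reconstruction of the original proof rather than an alternative. Two points deserve tightening. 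First, the constant: I checked your final expansion, and in the determinant conventions you implicitly use (no $\tfrac{1}{2}$ in $d$ or $\wedge$) the surviving terms give $d\Omega_J(X,JY,JZ)+d\Omega_T(X,TY,TZ)=2\sum_{i=1}^{2}\bigl(d\alpha_i(\phi Y,X)\alpha_i(Z)-d\alpha_i(\phi Z,X)\alpha_i(Y)\bigr)$, which against $4g((\nabla_X\phi)Y,Z)$ leaves a residual factor $\tfrac{1}{2}$ relative to the stated formula; this is absorbed exactly because Bande--Hadjar, following Blair, normalize $d\alpha(X,Y)=\tfrac{1}{2}\bigl(X\alpha(Y)-Y\alpha(X)-\alpha([X,Y])\bigr)$, so your determinant-convention $d\alpha_i$ is twice theirs. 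Note, moreover, that the calibrating datum is this convention for $d$ in the statement's right-hand side, not the associated-metric relation $g(X,\phi Y)=(d\alpha_1+d\alpha_2)(X,Y)$ as you suggest: in your own computation that relation enters only through the closedness of the fundamental form (being a sum of exact forms in any fixed convention), while every $d\alpha_i$ surviving into the final identity arises from $d(\alpha_1\wedge\alpha_2)=d\alpha_1\wedge\alpha_2-\alpha_1\wedge d\alpha_2$, whose normalization is forced once the meaning of $d\alpha_i$ in \eqref{Phicovariantderivation} is fixed. Second, your cancellations tacitly use $i_{Z_2}d\alpha_1=0$; the preliminaries of this paper list only three of the four contractions $i_{Z_i}d\alpha_j=0$, but the fourth holds by the definition of the Reeb vector fields of a contact pair, so this is harmless provided you invoke it explicitly.
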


We  use the following statements for the Riemann curvature;
\begin{eqnarray*}
	&R(X_1,X_2)X_3=\nabla_{X_1}\nabla_{X_2}X_3-\nabla_{X_2}\nabla_{X_1}X_3-\nabla_{[X_1,X_2]}X_3,\\& R(X_1,X_2,X_3,X_4)=g(R(X_1,X_2)X_3,X_4).
\end{eqnarray*}
for all $ X_1,X_2,X_3,X_4 \in \Gamma(TM) $. Curvature properties of \NCPM s\ were given in \cite{bande2015bochner}. For  $ X_1,X_2, X_3 \in \Gamma(M) $ and $ Z=Z_1+Z_2 $ Reeb vector field, we have
\begin{eqnarray}\label{R(X,Z)Y}
R(X_1,Z)X_2&=&-g(\phi X_1,\phi X_2)Z, 
\\
\label{R(X,Y,Z,V)}
R(X_1,X_2,Z,X_3)&=&d\alpha_1(\phi X_3,X_1)\alpha_1(X_2)+d\alpha_2(\phi X_3, X_1)\alpha_2(X_2)\\&&-d\alpha_1(\phi X_3,X_2)\alpha_1(X_1)-d\alpha_2(\phi X_3,X_2)\alpha_2(X_1) \notag,\\
\label{R(X,Z)Z}
R(X_1,Z)Z&=&-\phi^2X_1.
\end{eqnarray}
Let take an orthonormal basis of $ M $ by
\begin{equation*}
S=\{e_1,e_2,...,e_p,\phi e_1,\phi e_2,...,\phi e_p,e_{p+1},e_{p+2},...,e_{p+q}, \phi e_{p+1},\phi e_{p+2},...,\phi e_{p+q} , Z_1,Z_2\}.
\end{equation*}
Then for all $ X_1\in \Gamma(TM) $ we get the Ricci curvature of $ M $ as 
\begin{equation*}
Ric(X_1,Z)=\sum_{i=1}^{2p+2q}{d\alpha_1(\phi E_i, E_i)\alpha_1(X)+d\alpha_2(\phi E_i, E_i)\alpha_2(X)}.
\end{equation*}
where $ E_i\in S , \ \ 1\leq i \leq 2p+2q+2$. So, we obtain
\begin{eqnarray}
&Ric(X_1,Z)=0,\ \ \text{for } X_1\in \Gamma(\mathcal{H}) \label{Ric(X_1,Z)},\\
&Ric(Z,Z)=2p+2q.\label{Ric(Z,Z)}\\
&Ric(Z_1,Z_1)=2p , \ Ric(Z_2,Z_2)=2q , \ Ric(Z_1,Z_2)=0.
\end{eqnarray}
\section{Existence and Characterization of Generalized Quasi-Einstein Normal Metric Contact Pairs}
\begin{definition}
	Let $ M $ be a \NCPM. \ $ M $ is called \GQE \ \NCPM \ if the Ricci curvature of $ M $ has following form;
	\begin{equation*}
	Ric(X_1,X_2)=\lambda g(X_1,X_2)+\beta \alpha_1(X_1)\alpha_1(X_2)+\mu \alpha_2(X_1)\alpha_2(X_2)
	\end{equation*}
	for functions  $ \lambda, \beta, \mu $ on $ M $ and all $ X_1,X_2 \in \Gamma(TM) $.
\end{definition}
If we set $ X_1=X_2=Z_1 $ and $ X_1=X_2=Z_2 $, respectively, we obtain $ \beta=2p-\lambda$ and $ \mu=2q-\lambda $. Thus the Ricci curvature of \GQE \ \NCPM \ is given by 
\begin{equation}\label{GQEexpression}
Ric(X_1,X_2)=\lambda g(X_1,X_2)+(2p-\lambda) \alpha_1(X_1)\alpha_1(X_2)+(2q-\lambda) \alpha_2(X_1)\alpha_2(X_2) 
\end{equation}
for all $ X_1,X_2 \in \Gamma(TM) $ and the scalar curvature is 
\begin{equation}\label{scalar}
scal=2(\lambda+1)(p+q). 
\end{equation}
Let $ X $ be an arbitrary vector field on $ M $. We can write $ X=X^{\mathcal{H} }+X^{\mathcal{V}}$. Since the Ricci curvature is a linear tensor we have 
\begin{equation*}
Ric(X_1,X_2)=Ric(X^{\mathcal {H}}_1,X^{\mathcal {H}}_2)+Ric(X^{\mathcal {V}}_1,X^{\mathcal {V}}_2).
\end{equation*}
With take consider the decomposition of tangent bundle mentioned above (see \cite{unal2019some} for details), we get 
\begin{equation*}
Ric(X^{\mathcal {V}}_1,X^{\mathcal {V}}_2)= 2p\alpha_1(X_1)\alpha_1(X_2)+2q\alpha_2(X_1)\alpha_2(X_2).
\end{equation*}	
Thus, we reach following useful result.
\begin{proposition}\label{horizontaliseinstein}
	A \NCPM\ is \GQE\ manifold if and only if the horizontal bundle is Einstein, that is for a  function $ \lambda $ on M, we have $ Ric(X^{\mathcal {H}}_1,X^{\mathcal {H}}_2)=\lambda g(X^{\mathcal {H}}_1,X^{\mathcal {H}}_2) $.
\end{proposition}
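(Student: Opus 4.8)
\section*{Proof proposal}

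The plan is to prove the two implications of the equivalence separately, in both cases leaning on the orthogonal splitting $TM = \mathcal{H} \oplus \mathcal{V}$ together with the two identities established just above the statement: the additive decomposition $Ric(X_1,X_2) = Ric(X_1^{\mathcal{H}}, X_2^{\mathcal{H}}) + Ric(X_1^{\mathcal{V}}, X_2^{\mathcal{V}})$ and the explicit vertical value $Ric(X_1^{\mathcal{V}}, X_2^{\mathcal{V}}) = 2p\,\alpha_1(X_1)\alpha_1(X_2) + 2q\,\alpha_2(X_1)\alpha_2(X_2)$. The single structural fact driving both directions is that horizontal fields are annihilated by the contact forms, $\alpha_1(X^{\mathcal{H}}) = \alpha_2(X^{\mathcal{H}}) = 0$.

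For the forward implication, I would assume $M$ is \GQE\ and simply feed horizontal arguments $X_1^{\mathcal{H}}, X_2^{\mathcal{H}}$ into the defining relation \eqref{GQEexpression}. Because both $\alpha_1$ and $\alpha_2$ vanish on $\mathcal{H}$, the two rank-one correction terms disappear at once, leaving $Ric(X_1^{\mathcal{H}}, X_2^{\mathcal{H}}) = \lambda\, g(X_1^{\mathcal{H}}, X_2^{\mathcal{H}})$; that is, the horizontal bundle is Einstein with the same function $\lambda$.

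For the converse, I would start from the hypothesis $Ric(X_1^{\mathcal{H}}, X_2^{\mathcal{H}}) = \lambda\, g(X_1^{\mathcal{H}}, X_2^{\mathcal{H}})$ and substitute it, together with the vertical value, into the decomposition, yielding $Ric(X_1, X_2) = \lambda\, g(X_1^{\mathcal{H}}, X_2^{\mathcal{H}}) + 2p\,\alpha_1(X_1)\alpha_1(X_2) + 2q\,\alpha_2(X_1)\alpha_2(X_2)$. The one remaining task is to re-express the horizontal metric in terms of the ambient metric: since $\mathcal{H}$ and $\mathcal{V}$ are $g$-orthogonal and $\{Z_1, Z_2\}$ is orthonormal with $g(Z_i, X) = \alpha_i(X)$, one obtains $g(X_1^{\mathcal{H}}, X_2^{\mathcal{H}}) = g(X_1, X_2) - \alpha_1(X_1)\alpha_1(X_2) - \alpha_2(X_1)\alpha_2(X_2)$. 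Plugging this in and collecting coefficients reproduces exactly the \GQE\ form \eqref{GQEexpression}.

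There is essentially no hard obstacle here; the argument is a direct substitution. The only point needing a little care is the orthogonal decomposition of $g$ on $\mathcal{H}$ used in the converse — one must invoke the compatibility of $g$ and the relations $g(Z_i, Z_j) = \delta_{ij}$, $g(Z_i, X) = \alpha_i(X)$ rather than take it for granted — and, implicitly, the vanishing of the mixed Ricci terms $Ric(X^{\mathcal{H}}, X^{\mathcal{V}})$ underlying the decomposition, which rests on \eqref{Ric(X_1,Z)}.
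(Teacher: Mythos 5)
Your proposal is correct and matches the paper's argument: the paper also obtains the proposition directly from the orthogonal splitting $Ric(X_1,X_2)=Ric(X^{\mathcal H}_1,X^{\mathcal H}_2)+Ric(X^{\mathcal V}_1,X^{\mathcal V}_2)$ together with the computed vertical value $2p\,\alpha_1(X_1)\alpha_1(X_2)+2q\,\alpha_2(X_1)\alpha_2(X_2)$, exactly as you do. Your only addition is to spell out the identity $g(X^{\mathcal H}_1,X^{\mathcal H}_2)=g(X_1,X_2)-\alpha_1(X_1)\alpha_1(X_2)-\alpha_2(X_1)\alpha_2(X_2)$ and the vanishing of the mixed terms via \eqref{Ric(X_1,Z)}, which the paper leaves implicit (citing external work for details).
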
 
In \cite{de2004generalized} De and Ghosh gave a theorem for the existence of a \GQE\ Riemannian manifold. 
\begin{theorem}
	If the Ricci tensor $ Ric $ of a Riemannian manifold satisfies the relation
	\begin{eqnarray}\label{existence}
	Ric(X_2,X_3)Ric(X_1,X_4)-Ric(X_1,X_3)	Ric(X_2,X_4)&=&\gamma[g(X_2,X_3)g(X_1,X_4)\\&&-g(X_1,X_3)g(X_2,X_4)]\nonumber
	\end{eqnarray}
	where $ \gamma $ is a non-zero scalar, then the manifold is a generalized quasi Einstein manifold \cite{de2004generalized}.  
\end{theorem}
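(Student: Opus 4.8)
The plan is to translate the quartic-looking algebraic identity \eqref{existence} into a single pointwise equation for the Ricci operator $Q$, defined by $g(QX_1,X_2)=Ric(X_1,X_2)$, and then to read off the generalized quasi-Einstein form from the spectral data of $Q$. Since $Ric$ is symmetric, $Q$ is a self-adjoint endomorphism of $TM$ at each point, hence diagonalizable in an orthonormal frame; this is the structural fact I would lean on throughout.

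The first step is to contract the hypothesis. Fixing an orthonormal frame $\{e_k\}_{k=1}^{n}$ with $n=\dim M$, I would set $X_2=X_3=e_k$ and sum over $k$. The term $\sum_k Ric(e_k,e_k)$ produces the scalar curvature $scal$; the term $\sum_k Ric(X_1,e_k)Ric(e_k,X_4)$ rewrites, using self-adjointness, as $g(QX_1,QX_4)=g(Q^{2}X_1,X_4)$; and the two metric terms on the right collapse to $\gamma(n-1)g(X_1,X_4)$. This yields the pointwise operator identity
\begin{equation*}
Q^{2}-scal\cdot Q+\gamma(n-1)\,I=0 .
\end{equation*}
The entire argument hinges on this quadratic relation, so I would perform the contraction carefully, in particular keeping track of the self-adjointness used to identify $\sum_k Ric(X_1,e_k)Ric(e_k,X_4)$ with $g(Q^{2}X_1,X_4)$.

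From this relation the minimal polynomial of $Q$ divides $t^{2}-scal\,t+\gamma(n-1)$, so at each point $Q$ has at most two distinct eigenvalues $a,b$, namely the roots of that quadratic, with $a+b=scal$ and $ab=\gamma(n-1)$; since $\gamma\neq0$ neither eigenvalue vanishes. (The same conclusion follows transparently by substituting eigenvectors directly into \eqref{existence}, which gives $\rho_i\rho_j=\gamma$ for distinct eigenvalue indices.) I would then take the orthogonal eigenspace splitting $TM=E_a\oplus E_b$ and write $Ric=a\,g+(b-a)\,\Pi$, where $\Pi$ is the $(0,2)$-tensor associated with the orthogonal projection onto $E_b$. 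Choosing an orthonormal frame of the distinguished eigenspace and passing to dual $1$-forms expresses $\Pi$ as a sum of terms $\omega\otimes\omega$, and matching against the template $\lambda g+\beta\,\omega\otimes\omega+\mu\,\eta\otimes\eta$ gives $\lambda=a$, $\beta=\mu=b-a$ together with unit, mutually orthogonal $1$-forms.

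The main obstacle is precisely this last matching step: the quadratic relation only guarantees two eigenvalues, whereas the generalized quasi-Einstein form prescribes exactly two rank-one correction terms, so one must control the multiplicity of the non-generic eigenspace and verify the unit and orthogonality normalization of the resulting $1$-forms. I would organize the decomposition so that the distinguished eigenspace has dimension at most two, treating the Einstein case $a=b$ and the one-form (quasi-Einstein) case of a one-dimensional eigenspace as degenerate specializations of the same formula, and reserving the genuine two-form case for a two-dimensional distinguished eigenspace.
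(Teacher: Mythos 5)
Your contraction step is sound: summing \eqref{existence} over $X_2=X_3=e_k$ does give $Q^{2}-scal\cdot Q+\gamma(n-1)I=0$, so $Q$ has at most two eigenvalues at each point. But the proof does not close, and the step you yourself flag as the ``main obstacle'' is exactly where it fails. A two-eigenvalue Ricci operator gives $Ric=a\,g+(b-a)\Pi$ with $\Pi$ the projection onto the second eigenspace, and this is of generalized quasi-Einstein type only if that eigenspace has dimension at most two, since the deviation of $Ric$ from $\lambda g$ must have rank $\le 2$. Nothing in the quadratic relation bounds this multiplicity: ``organizing the decomposition so that the distinguished eigenspace has dimension at most two'' is not a choice available to you, it is precisely the remaining content of the theorem, and the implication ``two eigenvalues $\Rightarrow$ rank-$\le 2$ correction'' is false as linear algebra (a symmetric operator with eigenvalues $a,b$ of multiplicities $n-5$ and $5$ satisfies a quadratic, but its deviation from $a\,I$ has rank $5$).

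Moreover, your parenthetical check is not ``the same conclusion'' --- it is strictly stronger, and it undercuts your own plan. Substituting orthonormal eigenvectors $e_i,e_j$ ($i\ne j$) into \eqref{existence} yields $\rho_i\rho_j=\gamma$ for \emph{every} pair of indices, including pairs inside a single eigenspace; for $n\ge 3$, picking $k\notin\{i,j\}$ gives $\rho_k(\rho_i-\rho_j)=0$ with $\rho_k\ne 0$ (as $\gamma\ne 0$), so all eigenvalues coincide and the full hypothesis forces $Ric=\pm\sqrt{\gamma}\,g$, i.e.\ Einstein with $\beta=\mu=0$, the degenerate case excluded by the definition's requirement of non-zero $1$-forms and scalars. (Note also the clash with your contracted identity: two distinct eigenvalues would have to satisfy both $ab=\gamma$ and $ab=\gamma(n-1)$.) This shows the general statement is delicate, and indeed the paper does not prove it: it quotes the theorem from De and Ghosh, and its operative argument is entirely different from yours. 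It substitutes $X_1=X_4=Z$ with $X_2,X_3\in\Gamma(\mathcal{H})$ into \eqref{existence}, uses $Ric(X,Z)=0$ for horizontal $X$, $Ric(Z,Z)=m-2$ and $g(Z,Z)=2$ to obtain $Ric(X_2,X_3)=\frac{2\gamma}{m-2}g(X_2,X_3)$ on the horizontal bundle, and concludes by Proposition \ref{horizontaliseinstein}. No spectral theory and no multiplicity issue arise there, because the contact pair structure supplies the two distinguished directions $Z_1,Z_2$ in advance --- which is the very datum your spectral route fails to manufacture.
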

Assume that (\ref{existence}) is satisfied on a \NCPM \ $ M $. By setting $ X_1=X_4=Z $ and $ X_2,X_3 \in \Gamma(\mathcal{H}) $, and from (\ref{Ric(X_1,Z)}) we have 
\begin{equation*}
Ric(X_2,X_3)=\frac{2\gamma}{m-2}g(X_2,X_3).
\end{equation*}
Thus, with consider Proposition \ref{horizontaliseinstein} $ M $ is a \GQE\ manifold. Using (\ref{scalar}), we get $ \gamma=\frac{1}{2}(scal-m+2) $ and finally we state;
\begin{corollary}
	Let $ M $ be \NCPM\ with scalar curvature $ scal\neq m-2 $. If we have the relation
	\begin{eqnarray*}
		Ric(X_2,X_3)Ric(X_1,X_4)-Ric(X_1,X_3)Ric(X_2,X_4)&=& \frac{1}{2}(scal-m+2)[g(X_2,X_3)g(X_1,X_4)\\&&-g(X_1,X_3)g(X_2,X_4)]\
	\end{eqnarray*}
	on $ M $ for all $ X_1,X_2,X_3,X_4\in \Gamma(TM) $, then $ M $ is a \GQE\ manifold. 
\end{corollary}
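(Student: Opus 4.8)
The plan is to read the displayed identity as the single instance $\gamma=\frac{1}{2}(scal-m+2)$ of the hypothesis (\ref{existence}) in the preceding Theorem, and then to extract the horizontal Einstein condition by a judicious contraction, so that Proposition \ref{horizontaliseinstein} closes the argument. Concretely, I would substitute $X_1=X_4=Z$ with $Z=Z_1+Z_2$ the Reeb field, and take $X_2,X_3\in\Gamma(\mathcal{H})$ horizontal. The point of this choice is that it annihilates the ``cross'' terms on each side while leaving a clean proportionality between $Ric$ and $g$ on the horizontal bundle.

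For the key computation I would use the structural identities already recorded. On the left-hand side, $Ric(X_2,Z)=Ric(X_3,Z)=0$ by (\ref{Ric(X_1,Z)}) since $X_2,X_3$ are horizontal, so only the term $Ric(X_2,X_3)Ric(Z,Z)$ survives; and $Ric(Z,Z)=2p+2q=m-2$ by (\ref{Ric(Z,Z)}). On the right-hand side, $g(Z,X_2)=\alpha_1(X_2)+\alpha_2(X_2)=0$ and likewise $g(Z,X_3)=0$ for horizontal arguments, while $g(Z,Z)=2$; thus the bracket reduces to $2\,g(X_2,X_3)$. Putting these together yields $(m-2)\,Ric(X_2,X_3)=2\gamma\,g(X_2,X_3)$, i.e.
\begin{equation*}
Ric(X^{\mathcal{H}}_1,X^{\mathcal{H}}_2)=\frac{2\gamma}{m-2}\,g(X^{\mathcal{H}}_1,X^{\mathcal{H}}_2),
\end{equation*}
which is exactly the horizontal Einstein condition with $\lambda=\frac{2\gamma}{m-2}$. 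Proposition \ref{horizontaliseinstein} then upgrades this to the full \GQE\ form (\ref{GQEexpression}), and substituting $\lambda$ into the scalar relation (\ref{scalar}) recovers $\gamma=\frac{1}{2}(scal-m+2)$, confirming that the constant appearing in the statement is the only admissible one.

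The role of the hypothesis $scal\neq m-2$ is to guarantee $\gamma\neq 0$, equivalently $\lambda\neq 0$; without it the contraction would force the horizontal Ricci tensor to vanish and the resulting structure would be degenerate rather than a genuine \GQE\ manifold, and the cited Theorem (which demands a non-zero $\gamma$) would not apply. I do not expect a serious obstacle here: the entire argument is a one-line specialization followed by routine use of (\ref{Ric(X_1,Z)}), (\ref{Ric(Z,Z)}) and the relations $g(Z,Z)=2$, $\alpha_i(X^{\mathcal{H}})=0$. The only point requiring care is to check that the chosen substitution really isolates the horizontal block for \emph{all} horizontal pairs $X_2,X_3$ (so that one obtains the tensorial identity $Ric=\lambda g$ on $\mathcal{H}$, not merely a trace identity), after which Proposition \ref{horizontaliseinstein} does the conceptual work of converting horizontal-Einstein into \GQE.
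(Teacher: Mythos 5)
Your proposal is correct and is essentially the paper's own proof: the paper likewise sets $X_1=X_4=Z$ with $X_2,X_3\in\Gamma(\mathcal{H})$, uses $Ric(X,Z)=0$ from (\ref{Ric(X_1,Z)}) and $Ric(Z,Z)=m-2$ from (\ref{Ric(Z,Z)}) together with $g(Z,Z)=2$ to get $Ric(X_2,X_3)=\frac{2\gamma}{m-2}g(X_2,X_3)$, then concludes via Proposition \ref{horizontaliseinstein} and fixes $\gamma=\frac{1}{2}(scal-m+2)$ from (\ref{scalar}). Your explicit observation that the hypothesis $scal\neq m-2$ serves to make $\gamma$ non-zero, as required by the De--Ghosh existence theorem, is a correct reading that the paper leaves implicit.
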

The sectional curvature of a Riemann manifold gives us many important geometric properties.
The characterizations of Einstein \cite{chen2000characterizations,dumitru2007einstein},  quasi-Einstein \cite{bejan2007characterization} and \GQE   \ \cite{sular2012characterizations,dumitru2012characterization}  manifolds have been obtained by using the sectional curvature of subspaces of tangent bundle. We take into consideration Theorem 2.2. in \cite{sular2012characterizations}, and thus we obtain following theorem. Firstly we should to give some fundamental facts.\par 
Let $ \pi $ be a plane section in $ T_QM $ for any $ Q\in M $. The sectional curvature of $ \pi $ is given as $ Sec(\pi) = Sec(u\wedge v) $, where $ u,v $ orthonormal vector fields . For any
$( p+q )$-dimensional subspace $ \mathcal{L} \subset T_QM, \ 2 \leq p+q \leq m$, its scalar curvature $ scal(\mathcal{L})$ is
denoted by
\begin{equation}
scal(\mathcal{L})=\sum_{1\leq i,j \leq p+q}Sec(E_i\wedge E_j)
\end{equation}
\noindent
where $ {E_1, ..., E_n} $ is any orthonormal basis of $\mathcal{L}$ \cite{chen2000some}. When $ \mathcal{L}=T_QM $, the
scalar curvature  is just the scalar curvature  $ scal(Q) $ of $ M $ at $ 
Q \in M$. 

\begin{theorem}
	An $ (m=2p+2q+2)- $dimensional \NCPM\ is a \GQE \ manifold if and only if there exist a function $ \lambda $ on $ M $ satisfying 
	\begin{eqnarray*}
		scal(P)+p+q-\lambda&=&\sec(P^{\bot}), \ \ Z_1,Z_2 \in T_QP^{\bot}\\
		scal(N)+p+q&=&\sec(N^{\bot}), \ \ Z_1,Z_2 \in T_QN^{\bot}\\
		scal(R)+q-p&=&\sec(P^{\bot}), \ \ Z_1\in T_QR,Z_2 \in T_QR^{\bot}
	\end{eqnarray*}
	where $ (p+q+1)-$plane sections $ P,R $ and $ (p+q)-$plane section $ N $; $ P^{\bot},N^{\bot} $ and $ R^{\bot} $ denote the orthogonal complements of $ P,N $ and $ R $ in $ T_QM $, respectively. 
\end{theorem}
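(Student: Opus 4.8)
The plan is to reduce both implications to Proposition \ref{horizontaliseinstein}, which says that an \NCPM\ is \GQE\ precisely when its horizontal bundle $\mathcal{H}$ is Einstein, i.e. $Ric(u,u)=\lambda$ for every unit $u\in\Gamma(\mathcal{H})$. The bridge between this pointwise condition and the three displayed identities is one elementary bookkeeping step. Completing an orthonormal basis of a subspace $\mathcal{L}\subset T_QM$ to one of $T_QM$, the trace of the Ricci tensor over $\mathcal{L}$ satisfies
\begin{equation*}
\sum_{E_i\in\mathcal{L}}Ric(E_i,E_i)=scal(\mathcal{L})+\sum_{E_i\in\mathcal{L},\,E_j\in\mathcal{L}^{\bot}}Sec(E_i\wedge E_j),
\end{equation*}
so the mixed term cancels in the difference and
\begin{equation*}
scal(\mathcal{L})-scal(\mathcal{L}^{\bot})=\tfrac12\Big(\sum_{E_i\in\mathcal{L}}Ric(E_i,E_i)-\sum_{E_j\in\mathcal{L}^{\bot}}Ric(E_j,E_j)\Big).
\end{equation*}
I read the symbol $\sec(\cdot)$ on the right-hand sides as the scalar curvature $scal(\cdot)$ of the indicated complementary subspace (and the third line as $\sec(R^{\bot})$); with this reading the three equations are exactly instances of this difference formula.

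First I would prove the forward implication. Assuming $M$ is \GQE, formula (\ref{GQEexpression}) gives $Ric(u,u)=\lambda$ for unit horizontal $u$, while the vertical values $Ric(Z_1,Z_1)=2p$ and $Ric(Z_2,Z_2)=2q$ are recorded just before Section~3. For the first family, $P$ is a $(p+q+1)$-plane orthogonal to both Reeb fields, hence $P\subset\mathcal{H}$, and $P^{\bot}$ splits as $(p+q-1)$ horizontal directions together with $Z_1,Z_2$; substituting $\sum_{P}Ric=(p+q+1)\lambda$ and $\sum_{P^{\bot}}Ric=(p+q-1)\lambda+2p+2q$ into the difference formula yields $scal(P)+p+q-\lambda=scal(P^{\bot})$. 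The cases $N$ (a $(p+q)$-plane in $\mathcal{H}$, so $N^{\bot}$ carries $p+q$ horizontal directions and both $Z_i$) and $R$ (which captures $Z_1$ while $R^{\bot}$ captures $Z_2$) are identical computations, producing $scal(N)+p+q=scal(N^{\bot})$ and $scal(R)+q-p=scal(R^{\bot})$ through the separate values $2p$ and $2q$ of the two Reeb directions.

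For the converse I would fix $Q$ and let $P$ range over \emph{all} $(p+q+1)$-dimensional subspaces of $\mathcal{H}$ (equivalently, all plane sections with $Z_1,Z_2\in T_QP^{\bot}$). Running the difference formula in reverse, the first assumed identity forces $\sum_{E_i\in P}Ric(E_i,E_i)$ to equal a constant independent of $P$, namely $\lambda+\tfrac12\,\mathrm{tr}(Ric|_{\mathcal{H}})$, since $\mathrm{tr}(Ric|_{\mathcal{H}})=scal(Q)-2p-2q$ is a fixed number at $Q$. The heart of the argument is then the linear-algebra fact that a symmetric bilinear form on $\mathcal{H}$ whose trace over every subspace of the fixed intermediate dimension $p+q+1$ is constant must be a scalar multiple of $g|_{\mathcal{H}}$; this identifies $Ric|_{\mathcal{H}}=\lambda\,g|_{\mathcal{H}}$, and Proposition \ref{horizontaliseinstein} gives that $M$ is \GQE.

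The step I expect to be the genuine obstacle is this last extraction: passing from ``the Ricci trace is the same over every $(p+q+1)$-plane in $\mathcal{H}$'' to ``$Ric$ is pointwise proportional to $g$ on $\mathcal{H}$.'' I would handle it by the standard exchange argument: comparing two subspaces that agree except for a single basis vector (possible since $p+q+1<\dim\mathcal{H}=2p+2q$) shows all diagonal entries of $Ric|_{\mathcal{H}}$ coincide in every orthonormal frame, while rotating a pair of basis vectors by $45^{\circ}$ annihilates the off-diagonal entries, so constancy over the Grassmannian upgrades to genuine proportionality. A secondary point to keep straight is the normalization of $scal(\mathcal{L})$: the factor $\tfrac12$ in the difference formula is precisely what matches the coefficients $p+q-\lambda$, $p+q$, $q-p$ in the statement, so the subspace scalar curvature must be taken with the single-sum convention $\sum_{i<j}Sec(E_i\wedge E_j)$.
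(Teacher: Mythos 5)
Your proposal is correct, and it should be said up front that the paper contains no proof of this theorem at all: the statement is simply asserted as an adaptation of Theorem 2.2 of Sular and \"{O}zg\"{u}r \cite{sular2012characterizations}, so your argument supplies precisely the trace-counting proof that the citation presupposes. Your forward direction is the standard computation behind such characterizations: with the single-sum convention $scal(\mathcal{L})=\sum_{i<j}Sec(E_i\wedge E_j)$ (your closing remark that this convention is forced by the stated coefficients is right, since the paper's $\sum_{1\leq i,j\leq p+q}$ is ambiguous), the adapted splittings of $P^{\bot}$, $N^{\bot}$, $R$, $R^{\bot}$ into horizontal directions plus Reeb directions, together with $Ric(u,u)=\lambda$ for unit horizontal $u$ and the NCPM identities $Ric(Z_1,Z_1)=2p$, $Ric(Z_2,Z_2)=2q$, reproduce exactly the coefficients $p+q-\lambda$, $p+q$, $q-p$; likewise your readings of $\sec(\cdot)$ as $scal(\cdot)$ and of the third right-hand side as $scal(R^{\bot})$ are forced (a sectional curvature of a subspace of dimension at least three is meaningless) and agree with the cited source. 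Your converse deviates slightly from the usual route: the standard argument combines the first two identities, taking $P=N\oplus\mathbb{R}u$ so that $Ric(u,u)=\sum_{P}Ric-\sum_{N}Ric=\lambda$ falls out immediately, whereas you use only the first identity plus the exchange/rotation lemma that a symmetric form on $\mathcal{H}$ with constant trace over all $(p+q+1)$-planes is proportional to $g|_{\mathcal{H}}$; this is legitimate since $p,q\geq 1$ gives $p+q+1<2p+2q=\dim\mathcal{H}$, and it has the mild bonus of showing the second and third identities are redundant for sufficiency. Two small repairs: your first display should read $\sum_{E_i\in\mathcal{L}}Ric(E_i,E_i)=2\,scal(\mathcal{L})+\sum_{E_i\in\mathcal{L},\,E_j\in\mathcal{L}^{\bot}}Sec(E_i\wedge E_j)$ --- as written it contradicts your second display, which is the correct one and the only one you actually use --- and in the converse you should state explicitly that comparing traces, $(p+q+1)c=\lambda+(p+q)c$, forces the proportionality constant $c$ to equal the given function $\lambda$, after which Proposition \ref{horizontaliseinstein} (whose vertical input $\mathrm{tr}(Ric|_{\mathcal{H}})=scal-2p-2q$ you correctly justified from identities valid on every normal metric contact pair manifold, not merely the generalized quasi-Einstein ones) completes the proof.
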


\section{Generalized Quasi-constant Curvature of Normal Metric Contact Pair Manifolds}
\quad The notion of quasi-constant curvature was defined by Chen and Yano \cite{chen1972hypersurfaces}. De and Ghosh generalized this notion for a Riemannian manifold. Let $ M $ be a \NCPM. \ $ M $ is called a \NCPM \ of generalized quasi-constant curvature if the Riemannian curvature tensor of $ M $ satisfying; 
\begin{eqnarray}
R(X_1,X_2,X_3,X_4)&=&A[g(X_2,X_3)g(X_1,X_4)-g(X_1,X_3)g(X_2,X_4)]\label{gen.quasiconscurv}\\&& B[g(X_1,X_4)\alpha_1(X_2)\alpha_1(X_3)-g(X_1,X_3)\alpha_1(X_2)\alpha_1(X_4)\notag\\&&+g(X_2,X_3)\alpha_1(X_1)\alpha_1(X_4)-g(X_2,X_4)\alpha_1(X_1)\alpha_1(X_3)]\notag\\&& C[g(X_1,X_4)\alpha_2(X_2)\alpha_2(X_3)-g(X_1,X_3)\alpha_2(X_2)\alpha_2(X_4)\notag\\&&+g(X_2,X_3)\alpha_2(X_1)\alpha_2(X_4)-g(X_2,X_4)\alpha_2(X_1)\alpha_2(X_3)]\notag
\end{eqnarray} 
for all $ X_1,X_2,X_3,X_4 \in \Gamma(TM)$, where $ A,B $ and $ C $ are scalar functions. \par 
\begin{proposition}
	Let $ M $ be a \NCPM \ of generalized quasi-constant curvature. Then, we have followings;
	\begin{enumerate}
		\item [$ \bullet $] the sectional curvature of horizontal bundle is $ A $,
		\item [$ \bullet $] the sectional curvature of plane section spanned by $ X\in \Gamma(\mathcal{H}) $ and $ Z $ is $ 2A+B+C $,
		\item [$ \bullet $] the sectional curvature of plane section spanned by $ X\in \Gamma(\mathcal{H}) $ and $ Z_1,Z_2$ is $ A+B$ and $ A+C $, respectively. 
	\end{enumerate}
	
\end{proposition}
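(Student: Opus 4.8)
The plan is to establish all three items by directly substituting the relevant vector fields into the generalized quasi-constant curvature identity (\ref{gen.quasiconscurv}), under the convention $Sec(u\wedge v)=R(u,v,v,u)$ for orthonormal $u,v$; this is the consistent convention here, since with $B=C=0$ the $A$-bracket alone gives $R(u,v,v,u)=A[g(v,v)g(u,u)-g(u,v)^2]=A$. The only structural facts I need are the pointwise evaluations of $\alpha_1,\alpha_2$ and $g$: for any $X\in\Gamma(\mathcal{H})$ one has $\alpha_1(X)=\alpha_2(X)=0$ and consequently $g(X,Z_i)=\alpha_i(X)=0$, while $g(Z_i,Z_j)=\delta_{ij}$, $\alpha_1(Z_1)=\alpha_2(Z_2)=1$ and $\alpha_1(Z_2)=\alpha_2(Z_1)=0$.

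For the horizontal sectional curvature I would pick an orthonormal pair $X,Y\in\Gamma(\mathcal{H})$ and compute $R(X,Y,Y,X)$. Each summand of the $B$- and $C$-brackets contains a factor $\alpha_1$ or $\alpha_2$ evaluated at a horizontal argument, so both brackets vanish identically and only the $A$-bracket survives, yielding $A[g(Y,Y)g(X,X)-g(X,Y)^2]=A$.

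For the third item I would take a unit $X\in\Gamma(\mathcal{H})$ together with $Z_1$ (resp.\ $Z_2$); these are automatically orthonormal because $g(X,Z_1)=\alpha_1(X)=0$. Feeding $X_1=X_4=X$, $X_2=X_3=Z_1$ into (\ref{gen.quasiconscurv}), the $A$-bracket contributes $A$, in the $B$-bracket only the term $g(X,X)\alpha_1(Z_1)^2$ survives and contributes $B$, and the entire $C$-bracket vanishes since $\alpha_2(Z_1)=\alpha_2(X)=0$; hence $Sec(X\wedge Z_1)=A+B$. The analogous computation with $Z_2$, interchanging the roles of $\alpha_1,\alpha_2$ and of $B,C$, gives $A+C$.

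The second item is where the only genuine subtlety arises, and is the step I expect to require the most care. Here the vector is the total Reeb field $Z=Z_1+Z_2$, which satisfies $\alpha_1(Z)=\alpha_2(Z)=1$ but has $g(Z,Z)=g(Z_1,Z_1)+g(Z_2,Z_2)=2$, so it is not of unit length. Substituting $X_1=X_4=X$ and $X_2=X_3=Z$ into (\ref{gen.quasiconscurv}), the $A$-bracket gives $A\,g(Z,Z)g(X,X)=2A$, the surviving $B$-term gives $B\,g(X,X)\alpha_1(Z)^2=B$, and likewise the $C$-bracket gives $C$, so that $R(X,Z,Z,X)=2A+B+C$, the asserted value. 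The point to watch is the interplay between which $\alpha$-terms survive and the normalization of $Z$: the factor $2$ multiplying $A$ is exactly the norm $g(Z,Z)$, so one must keep track that the reported quantity $2A+B+C$ is $R(X,Z,Z,X)$ for the plane spanned by $X$ and $Z$, with the non-unit length of $Z$ absorbed into this expression rather than normalized away.
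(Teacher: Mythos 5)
Your proposal is correct and follows essentially the same route as the paper: direct substitution of the relevant pairs of vector fields into the generalized quasi-constant curvature identity, with the $B$- and $C$-brackets killed by $\alpha_i$ vanishing on horizontal arguments. Your explicit remark that $g(Z,Z)=2$ and that $2A+B+C$ is the unnormalized quantity $R(X,Z,Z,X)$ is in fact more careful than the paper's computation (which also silently uses this convention, consistently with the corollary $sec(X,Z)=sec(X,Z_1)+sec(X,Z_2)$), and you avoid the paper's typographical slips such as writing $X_2=X_4$ where $X_2=X_3$ is meant.
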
  
\begin{proof}
	Let take $ X_1=X_4=X , X_2=X_4=X^{\prime}$, where $ X,X^{\prime} $ unit and mutually orthogonal horizontal vector fields. Then from (\ref{gen.quasiconscurv}) we get 
	\begin{eqnarray*}
		sec(X,X^{\prime})&=&A[g(X^{\prime},X^{\prime})g(X,X)-g(X,X^{\prime})g(X^{\prime},X)]\\&=&A
	\end{eqnarray*}
	For $ X_1=X_4=X , X_2=X_4=Z$ for unit horizontal vector field $ X $ we get 
	\begin{eqnarray*}
		sec(X,Z)&=&A[g(Z,Z)g(X,X)-g(X,X^{\prime})g(X^{\prime},X)-g(X,Z)g(Z,X)]\\&&+B(g(X,X)\alpha_1(Z)\alpha_1(Z)+Cg(X,X)\alpha_2(Z)\alpha_2(Z)\\&=&2A+B+C
	\end{eqnarray*}
	By following same steps one can obtain other results. 
\end{proof}
From above proposition we get 
\begin{corollary} 
	\NCPM \ of generalized quasi-constant curvature, we have 
	\begin{equation*}
	sec(X,Z)=sec(X,Z_1)+sec(X,Z_2)
	\end{equation*}
	for any horizontal and unit vector field $ X $. 
\end{corollary}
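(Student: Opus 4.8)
The plan is to read the identity straight off the preceding Proposition, which already computes every sectional curvature that appears in the statement, so that no fresh curvature calculation is required. First I would recall its three conclusions for a unit horizontal vector field $X$: the plane spanned by $X$ and $Z$ satisfies $sec(X,Z)=2A+B+C$, while the planes spanned by $X$ and $Z_1$, respectively by $X$ and $Z_2$, satisfy $sec(X,Z_1)=A+B$ and $sec(X,Z_2)=A+C$. Each of these values comes from inserting the relevant pair of vectors into the defining relation (\ref{gen.quasiconscurv}) and simplifying with $\alpha_1(Z_1)=\alpha_2(Z_2)=1$, $\alpha_1(Z_2)=\alpha_2(Z_1)=0$, together with $\alpha_i(X)=0$ for $X\in\Gamma(\mathcal{H})$.

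The corollary then reduces to a single addition, namely
\begin{equation*}
sec(X,Z_1)+sec(X,Z_2)=(A+B)+(A+C)=2A+B+C=sec(X,Z),
\end{equation*}
so once the Proposition is in hand the claim is immediate and no further invocation of the normality hypotheses (\ref{R(X,Z)Y})--(\ref{R(X,Z)Z}) is needed.

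The one place to be careful is the normalization. Because $Z=Z_1+Z_2$ with $Z_1,Z_2$ orthonormal, $Z$ is not a unit vector, so the quantity written $sec(X,Z)$ in the Proposition is really the curvature numerator $R(X,Z,Z,X)$ rather than the fully normalized sectional curvature, whereas $sec(X,Z_1)$ and $sec(X,Z_2)$ are genuinely normalized since $Z_1,Z_2$ are unit. I would therefore make explicit that the same convention is used for all three quantities, so that the additive decomposition is read off consistently; with that caveat understood there is no substantive obstacle, the result being a direct arithmetic consequence of the Proposition.
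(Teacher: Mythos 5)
Your proposal matches the paper's own reasoning exactly: the corollary is deduced directly from the preceding Proposition by adding $sec(X,Z_1)+sec(X,Z_2)=(A+B)+(A+C)=2A+B+C=sec(X,Z)$, with no further curvature computation. Your normalization caveat --- that since $g(Z,Z)=2$ the quantity $sec(X,Z)$ is really the unnormalized numerator $R(X,Z,Z,X)$, consistent with how the Proposition itself computes it --- is a correct and worthwhile clarification that the paper leaves implicit.
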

We can also characterize a \GQE \ \NCPM \  of generalized quasi-constant curvature. We get following result. 
\begin{theorem}
	A \NCPM \ of generalized quasi-constant curvature is a \GQE \ manifold with coefficients $\lambda_1=A(m-1)+B+C, \ \lambda_2=B(m-2)$, and $ \lambda_3=C(m-2)$. 
\end{theorem}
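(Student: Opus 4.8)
The plan is to recover the Ricci tensor of $M$ by tracing the generalized quasi-constant curvature tensor (\ref{gen.quasiconscurv}). I would use the orthonormal basis $S=\{E_1,\dots,E_m\}$ fixed in the Preliminaries, with $m=2p+2q+2$, together with the convention that the Ricci tensor is the contraction $Ric(X_2,X_3)=\sum_{i=1}^{m}R(E_i,X_2,X_3,E_i)$. This is exactly the convention used earlier to compute $Ric(X_1,Z)$, so no new normalization is introduced.

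First I would set $X_1=X_4=E_i$ in (\ref{gen.quasiconscurv}) and sum over $i$. The expression splits into the three bracketed $A$-, $B$- and $C$-groups, which I would evaluate separately using the completeness identities $\sum_i g(E_i,E_i)=m$ and $\sum_i g(E_i,X)g(E_i,Y)=g(X,Y)$, together with the relation $\alpha_k(E_i)=g(Z_k,E_i)$ coming from $g(Z_k,X)=\alpha_k(X)$ and the unit/orthonormality $g(Z_i,Z_j)=\delta_{ij}$.

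The $A$-group collapses to $A(m-1)g(X_2,X_3)$. In the $B$-group the four terms are handled, in order, by $\sum_i g(E_i,E_i)=m$, by the contraction $\sum_i\alpha_1(E_i)g(E_i,X_3)=\alpha_1(X_3)$, by $\sum_i\alpha_1(E_i)^2=g(Z_1,Z_1)=1$, and again by $\sum_i\alpha_1(E_i)g(E_i,X_2)=\alpha_1(X_2)$; summing these gives $B\big[(m-2)\alpha_1(X_2)\alpha_1(X_3)+g(X_2,X_3)\big]$. The $C$-group is identical with $Z_2,\alpha_2$ replacing $Z_1,\alpha_1$, producing $C\big[(m-2)\alpha_2(X_2)\alpha_2(X_3)+g(X_2,X_3)\big]$. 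Collecting the coefficient of $g(X_2,X_3)$ then yields
\[ Ric(X_2,X_3)=\big[A(m-1)+B+C\big]g(X_2,X_3)+B(m-2)\alpha_1(X_2)\alpha_1(X_3)+C(m-2)\alpha_2(X_2)\alpha_2(X_3), \]
which is precisely the \GQE\ form with $\lambda_1=A(m-1)+B+C$, $\lambda_2=B(m-2)$ and $\lambda_3=C(m-2)$, completing the argument.

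Since the whole proof is a single trace computation, there is no deep obstacle; the only delicate point is the bookkeeping inside the $B$- and $C$-groups. There the leading term $m\,\alpha_1(X_2)\alpha_1(X_3)$ coming from $g(E_i,E_i)$ must be combined with the two cross-term contractions, each contributing $-\alpha_1(X_2)\alpha_1(X_3)$, to produce the coefficient $(m-2)$, while the term carrying $\sum_i\alpha_1(E_i)^2=1$ supplies the extra $g(X_2,X_3)$ that feeds into $\lambda_1$. Keeping straight which of the four terms yields the $g(X_2,X_3)$ summand and which yield the $\alpha_1(X_2)\alpha_1(X_3)$ summand is where a sign or coefficient slip is most likely to occur.
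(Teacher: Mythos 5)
Your proposal is correct and follows essentially the same route as the paper: contract the generalized quasi-constant curvature expression (\ref{gen.quasiconscurv}) over an orthonormal basis to produce the Ricci tensor, with your choice of slots $X_1=X_4=E_i$ equivalent to the paper's $X_2=X_3=E_i$ via the pair symmetry of $R$. The only cosmetic difference is that you evaluate the contractions with the general identities $\sum_i\alpha_k(E_i)g(E_i,X)=\alpha_k(X)$ and $\sum_i\alpha_k(E_i)^2=1$, where the paper instead splits the basis into horizontal vectors (on which $\alpha_j(E_i)=0$) and the Reeb vectors $Z_1,Z_2$; both yield the same coefficients $\lambda_1=A(m-1)+B+C$, $\lambda_2=B(m-2)$, $\lambda_3=C(m-2)$.
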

\begin{proof}
	Let $ M $ be \NCPM \ of generalized quasi-constant curvature.  Take an orthonormal basis of $ M $ as	
	\begin{eqnarray*}
		S=\{e_1,e_2,...,e_p,\phi_1e_1,\phi_1e_2,...,\phi_1e_p, e_{p+1} ,e_{p+2},...,e_{p+q}, \phi_2e_{p+1} ,\phi_2e_{p+2},...,\phi_2e_{p+q}, Z_1,Z_2\}.
	\end{eqnarray*}
	By taking sum of (\ref{gen.quasiconscurv}) from $ i=1 $ to $ i=2p+2q+2 $ for $ X_2=X_3=E_i \in S$, we obtain 
	\begin{eqnarray*}
		\sum_{i=1}^{2p+2q+2}R(X_1,E_i,E_i,X_4)&=&\sum_{i=1}^{2p+2q+2}\{A[g(E_i,E_i)g(X_1,X_4)-g(X_1,E_i)g(E_i,X_4)]\\&& B[g(X_1,X_4)\alpha_1(E_i)\alpha_1(E_i)-g(X_1,E_i)\alpha_1(E_i)\alpha_1(X_4)\notag\\&&+g(E_i,E_i)\alpha_1(X_1)\alpha_1(X_4)-g(E_i,X_4)\alpha_1(X_1)\alpha_1(E_i)]\notag\\&& C[g(X_1,X_4)\alpha_2(E_i)\alpha_2(E_i)-g(X_1,E_i)\alpha_2(E_i)\alpha_2(X_4)\notag\\&&+g(E_i,E_i)\alpha_2(X_1)\alpha_2(X_4)-g(E_i,X_4)\alpha_2(X_1)\alpha_2(E_i)]\}\notag. 
	\end{eqnarray*} 
	For $ 1\leq i \leq 2p+2q $ since  $ \alpha_j(E_i)=0 $, $j =1,2 $ and $ \sum_{i=1}^{2p+q+2}g(X_1,E_i)g(E_i,X_2)=g(X_1,X_2)$ we get 
	\begin{eqnarray*}\label{genconstantresult}
		Ric(X_1,X_4)&=&[A(m-1)+B+C]g(X_1,X_4)+B(m-2)\alpha_1(X_1)\alpha_1(X_4)\\&&+C(m-2)\alpha_2(X_1)\alpha_2(X_4)\notag
	\end{eqnarray*}
	which completes the proof. 
\end{proof}
\section{Normal Metric Contact Pair Manifold Satisfying Cyclic Ricci Tensor}
\quad A Riemann manifold satisfies cyclic Ricci tensor if we have 
\begin{equation*}
(\nabla_{X_1}Ric)(X_2,X_3)+(\nabla_{X_2}Ric)(X_3,X_1)+(\nabla_{X_3}Ric)(X_1,X_2)=0
\end{equation*}
for all $ X_1,X_2,X_3 \in \Gamma(TM) $. De and Mallick proved that a \GQE\ Riemann manifold satisfies cyclic Ricci tensor if generators of the manifolds are Killing \cite{de2015generalized}. \par
The characteristic vector fields of a \NCPM\ $ Z_1,Z_2 $ are Killing vector fields \cite{bande2013symmetry}. Thus, by easy computations, we get 
\begin{eqnarray}
(\nabla_{X_1}\alpha_1)X_2+(\nabla_{X_2}\alpha_1)X_1=0 \label{covdervalpha1},\ \  (\nabla_{X_1}\alpha_2)X_2+(\nabla_{X_2}\alpha_2)X_1=0.
\end{eqnarray}
Let $ M $ be a \GQE\ \NCPM. \ It is well known that 
\begin{equation*}
(\nabla_{X_1}Ric)(X_2,X_3)=\nabla_{X_1}Ric(X_2,X_3)-Ric(\nabla_{X_1}X_2,X_3)-Ric(X_2,\nabla_{X_1}X_3). 
\end{equation*}
Then, from (\ref{GQEexpression}) we obtain 
\begin{eqnarray}\label{nablaRic}
(\nabla_{X_1}Ric)(X_2,X_3)&=&X_1[\lambda]g(\phi X_2,\phi X_3)\\&&+(2p-\lambda)[\left((\nabla_{X_1}\alpha_1)X_3\right)\alpha_1(X_3)+\left((\nabla_{X_1}\alpha_1)X_2\right)\alpha_1(X_2)] \notag \\&&(2q-\lambda)[\left((\nabla_{X_1}\alpha_1)X_3\right)\alpha_2(X_2)+\left((\nabla_{X_1}\alpha_1)X_2\right)\alpha_2(X_3)]\notag 
\end{eqnarray}
where $ X_1[\lambda] $ is the derivation of $ \lambda $ in the direction of $ X_1 $. Thus from (\ref{covdervalpha1}) we obtain 
\begin{eqnarray*}
	(\nabla_{X_1}Ric)(X_2,X_3)+(\nabla_{X_2}Ric)(X_3,X_1)+(\nabla_{X_3}Ric)(X_1,X_2)&&\\=
	X_1[\lambda]g(\phi X_2,\phi X_3)+X_2[\lambda]g(\phi  X_3,\phi X_1)+X_3[\lambda]g(\phi X_1, \phi X_2).
\end{eqnarray*}
As a consequence , we can state following theorem.
\begin{theorem}\label{cyclicRicciTen.Theo}
	Let $ M $ be \GQE \ \NCPM. \ If $ \lambda  $ is constant then $ M $ satisfies cyclic Ricci tensor.  
\end{theorem}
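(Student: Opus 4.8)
The plan is to start from the generalized quasi-Einstein form (\ref{GQEexpression}) of the Ricci tensor and differentiate it covariantly. Since the Levi--Civita connection is metric, $\nabla g=0$, so when I apply $\nabla_{X_1}$ to $Ric=\lambda g+(2p-\lambda)\alpha_1\otimes\alpha_1+(2q-\lambda)\alpha_2\otimes\alpha_2$ the only contributions come from differentiating the coefficient functions $\lambda,\ 2p-\lambda,\ 2q-\lambda$ and the $1$-forms $\alpha_1,\alpha_2$. Collecting the $\lambda$-dependent terms through the compatibility identity $g(\phi X,\phi Y)=g(X,Y)-\alpha_1(X)\alpha_1(Y)-\alpha_2(X)\alpha_2(Y)$ yields exactly the expression (\ref{nablaRic}): a term $X_1[\lambda]\,g(\phi X_2,\phi X_3)$ together with terms carrying the covariant derivatives $(\nabla_{X_1}\alpha_i)$.

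Next I would form the cyclic sum over $X_1,X_2,X_3$. The decisive structural input is that the Reeb vector fields $Z_1,Z_2$ of a \NCPM\ are Killing \cite{bande2013symmetry}, which is what produces the skew-symmetry relations (\ref{covdervalpha1}), namely $(\nabla_{X_1}\alpha_i)X_2+(\nabla_{X_2}\alpha_i)X_1=0$ for $i=1,2$. Feeding these into the cyclic sum, every term containing a covariant derivative of $\alpha_1$ or $\alpha_2$ pairs off with an oppositely signed partner and cancels, so the entire $\alpha$-dependent part drops out. What survives is purely the $\lambda$-derivative part,
\begin{equation*}
X_1[\lambda]\,g(\phi X_2,\phi X_3)+X_2[\lambda]\,g(\phi X_3,\phi X_1)+X_3[\lambda]\,g(\phi X_1,\phi X_2).
\end{equation*}

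Finally, imposing that $\lambda$ is constant makes each directional derivative $X_i[\lambda]$ vanish identically, so all three surviving terms are zero and the cyclic sum of $(\nabla Ric)$ vanishes for arbitrary $X_1,X_2,X_3$; this is precisely the cyclic Ricci tensor condition. I expect the only genuinely delicate step to be the cancellation of the $\alpha$-derivative terms in the cyclic sum: one must track signs carefully and apply (\ref{covdervalpha1}) for $\alpha_1$ and $\alpha_2$ separately. Once that bookkeeping is done, the constancy of $\lambda$ closes the argument immediately, so the theorem is essentially a corollary of the computation (\ref{nablaRic}) combined with the Killing property of the characteristic vector fields.
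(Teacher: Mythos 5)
Your proposal is correct and follows essentially the same route as the paper's own argument: covariantly differentiating the form (\ref{GQEexpression}) to obtain (\ref{nablaRic}), cancelling the $\alpha_i$-derivative terms in the cyclic sum via the Killing-induced relations (\ref{covdervalpha1}), and concluding from the constancy of $\lambda$. No gaps; the grouping of the covariant-derivative terms by their $\alpha_i(X_j)$ factors indeed makes each pair vanish by (\ref{covdervalpha1}), exactly as you anticipated.
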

A Riemann manifold satisfies Codazzi type of Ricci tensor if 
\begin{equation*}
(\nabla_{X_1}Ric)(X_2,X_3)-(\nabla_{X_2}Ric)(X_1,X_3)=0
\end{equation*}
for all $ X_1\,X_2 $ vector fields on $ M $. In \cite{de2015generalized}
it has been proven that if a \GQE\ Riemann manifold satisfies Codazzi type of Ricci tensor, then the associated 1-forms are closed. \par
Suppose that Ricci tensor $ Ric $ of a \NCPM\ $ M $ is Codazzi type. Then, from (\ref{covdervalpha1}) and (\ref{nablaRic}) we obtain  
\begin{eqnarray*}
	&&(2p-\lambda)[((\nabla_{X_1}\alpha_1)X_2-(\nabla_{X_2}\alpha_1)X_1)\alpha_1(X_3) \\&&+ ((\nabla_{X_1}\alpha_1)X_3\alpha_1(X_2)+(\nabla_{X_2}\alpha_1)X_3\alpha_1(X_1))]\\&&+(2q-\lambda)[((\nabla_{X_1}\alpha_1)X_2-(\nabla_{X_2}\alpha_1)X_1)\alpha_2(X_3) \\&&+ ((\nabla_{X_1}\alpha_1)X_3\alpha_2(X_2)+(\nabla_{X_2}\alpha_1)X_3\alpha_2(X_1))]=0.
\end{eqnarray*}
Let take $ X_3=Z_1 $, then we get 
\begin{equation*}
(2p-\lambda)((\nabla_{X_1}\alpha_1)X_2-(\nabla_{X_2}\alpha_1)X_1)=0
\end{equation*}
which implies $ \lambda=2p $ or $ (\nabla_{X_1}\alpha_1)X_2-(\nabla_{X_2}\alpha_1)X_1=0 $. If $ \lambda=2p $ the manifold is not \GQE ,\ so this case is not possible. In the other case we obtain 
\begin{equation*}
0=(\nabla_{X_1}\alpha_1)X_2-(\nabla_{X_2}\alpha_1)X_1=d\alpha_1(X_1,X_2)=0
\end{equation*}
and so $ \alpha_1 $ is closed. Similarly by choosing $ X_3=Z_2  $ we obtain $ \alpha_2$ is closed. As we know contact pairs $ (\alpha_1,\alpha_2) $ are not closed. So our assumption is not valid. Finally we conclude that 
\begin{theorem}
	A \GQE\ \NCPM \ do not satisfy Codazzi type of Ricci tensor. 
\end{theorem}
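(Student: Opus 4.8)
The plan is to argue by contradiction: I assume that a \GQE\ \NCPM\ $M$ \emph{does} satisfy the Codazzi condition $(\nabla_{X_1}Ric)(X_2,X_3)-(\nabla_{X_2}Ric)(X_1,X_3)=0$, and then extract a contradiction with the defining nondegeneracy axiom of a contact pair. The natural starting point is the explicit Ricci form (\ref{GQEexpression}). Differentiating it covariantly and using that the metric is parallel, the derivative of the scalar $\lambda$ lands on the combination $g(X_2,X_3)-\alpha_1(X_2)\alpha_1(X_3)-\alpha_2(X_2)\alpha_2(X_3)=g(\phi X_2,\phi X_3)$, which reproduces the expression (\ref{nablaRic}): a term $X_1[\lambda]\,g(\phi X_2,\phi X_3)$ together with the $(2p-\lambda)$- and $(2q-\lambda)$-weighted first derivatives of $\alpha_1$ and $\alpha_2$.

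Next I would form the antisymmetrized (Codazzi) difference and feed in the Killing relations (\ref{covdervalpha1}) coming from the fact that the Reeb fields $Z_1,Z_2$ are Killing \cite{bande2013symmetry}. Rather than simplify the identity at a generic $X_3$, the efficient move is to test it against the vertical directions. Specializing $X_3=Z_1$ is decisive for two reasons: the term $X_1[\lambda]\,g(\phi X_2,\phi Z_1)$ vanishes automatically because $\phi Z_1=0$, and the normalizations $\alpha_1(Z_1)=1$, $\alpha_2(Z_1)=0$ together with $(\nabla_{X_1}\alpha_1)Z_1=0$ collapse all the remaining contributions except the one carrying $(2p-\lambda)$. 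What survives is $(2p-\lambda)\big((\nabla_{X_1}\alpha_1)X_2-(\nabla_{X_2}\alpha_1)X_1\big)=0$, and the symmetric choice $X_3=Z_2$ yields the analogue with coefficient $(2q-\lambda)$ and the $1$-form $\alpha_2$.

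From here I would recognize that $(\nabla_{X_1}\alpha_1)X_2-(\nabla_{X_2}\alpha_1)X_1=d\alpha_1(X_1,X_2)$ identically, so the surviving identity reads $(2p-\lambda)\,d\alpha_1(X_1,X_2)=0$. One must then discard the alternative $\lambda=2p$: in that case the coefficient $\beta=2p-\lambda$ in the defining relation vanishes and $M$ degenerates, failing to be genuinely \GQE\ and contradicting the hypothesis; the same reasoning excludes $\lambda=2q$. Hence $d\alpha_1=0$, and symmetrically $d\alpha_2=0$, i.e.\ both structure forms are closed. The final step confronts $d\alpha_1=d\alpha_2=0$ with the contact-pair axiom $\alpha_1\wedge(d\alpha_1)^p\wedge\alpha_2\wedge(d\alpha_2)^q\neq 0$, which is impossible once any $d\alpha_i$ vanishes. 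This contradiction shows the Codazzi condition cannot hold on a \GQE\ \NCPM.

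I expect the main obstacle to be purely computational bookkeeping in the vertical specialization rather than any deep geometric input: one has to verify carefully that at $X_3=Z_1$ the $(2q-\lambda)$-block and the $\lambda$-derivative block both drop out (using $\phi Z_i=0$, $\alpha_i(Z_j)=\delta_{ij}$, and the vanishing of the mixed covariant derivatives of the $1$-forms in vertical directions), so that the identity genuinely isolates $(2p-\lambda)\,d\alpha_1$. The conceptual content, by contrast, is light: it is simply the incompatibility between the forced closedness of $\alpha_1,\alpha_2$ and the nondegeneracy that defines a contact pair.
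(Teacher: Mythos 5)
Your argument is correct and is essentially the paper's own proof: you differentiate the \GQE\ Ricci form (\ref{GQEexpression}), antisymmetrize using the Killing relations (\ref{covdervalpha1}), specialize at $X_3=Z_1$ and $X_3=Z_2$ to isolate $(2p-\lambda)\,d\alpha_1(X_1,X_2)=0$ and $(2q-\lambda)\,d\alpha_2(X_1,X_2)=0$, rule out $\lambda=2p$ (and $\lambda=2q$) exactly as the paper does because the \GQE\ structure would degenerate, and conclude closedness of $\alpha_1,\alpha_2$, contradicting $\alpha_1\wedge(d\alpha_1)^p\wedge\alpha_2\wedge(d\alpha_2)^q\neq 0$. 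No gaps; the only difference is presentational, in that you phrase the conclusion explicitly as a contradiction with the contact-pair nondegeneracy axiom, which the paper states more briefly as ``contact pairs are not closed.''
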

\section{Normal Metric Contact Pair Manifold Satisfying Certain curvature Conditions }
\quad  Curvature tensors give us many geometric properties of contact manifolds. Some properties of \NCPM\ satisfying certain conditions of curvature tensors were given in \cite{bande2015bochner,unal2019some}. In this section we examine the $ \mathcal{M}- $projective curvature tensor $ \mathcal{W} $, conformal curvature tensor $ \mathcal{C} $ and concircular curvature tensor $ \mathcal{Z} $ on a \NCPM. \par
Let $ M $ be an $ (m=2p+2q+2)- $dimensional \NCPM. \ $ \mathcal{M}- $projective curvature tensor of $ M $ is given by ,
\begin{eqnarray}\label{m-projective}
\mathcal{W}( X_1,X_2)X_3&=&R( X_1,X_2)X_3-\frac{1}{2(m-1)}[Ric(X_2,X_3)X_1\\&&-Ric(X_1,X_3)X_2+g(X_2,X_3)QX_1  -g(X_1,X_3)QX_2]\notag
\end{eqnarray}
for $ X_1,X_2,X_3 \in \Gamma(TM)$, where $ Q $ is Ricci operator is given by $ Ric(X_1,X_2)=g(QX_1,X_2) $  \cite{pokhariyal1970curvature}.  $ \mathcal{M}- $projective curvature tensor on manifolds with different structures studied by many authors \cite{ayar2019m,prakasha2020M,zengin2014m}.\par 
From (\ref{m-projective}), we have
\begin{eqnarray}
\mathcal{W}( X_1,Z)Z&=&\frac{m}{2m-1}X_1-\frac{1}{m-1}QX_1 \label{M(X,Z)Z}\\
\mathcal{W}( X_1,X_2)Z&=&R(X_1,X_2)Z \label{M(X,Y)Z}\\
\mathcal{W}( X_1,Z)X_2&=&\left[\frac{(2m-1)(m-2)}{2(m-1)}g(X_1,X_2)+\frac{1}{2(m-1)}Ric(X_1,X_2)\right]Z\label{M(X,Z)Y}
\end{eqnarray}
for $ X_1,X_2,X_3 \in \Gamma(\mathcal{H}) $. Also, since $ Ric(X_1,X_2)=g(QX_1,X_2) $, where $ Q $ is the Ricci operator, we have  
\begin{eqnarray}
\mathcal{W}( X_1,X_2, X_3 , X_4)&&=R( X_1,X_2, X_3 , X_4)-\frac{1}{2(m-1)}[Ric(X_2,X_3)g(X_1,X_4)\label{m-projec}\\&&-Ric(X_1,X_3)g(X_2,X_4)+g(X_2,X_3)Ric(X_1,X_4)\notag \\&& -g(X_1,X_3)Ric(X_2,X_4)]\notag.
\end{eqnarray}
for all $X_1,X_2,X_3 \in \Gamma(M)  $. 
$ M $  is called $ \mathcal{M}- $projective flat if $ \mathcal{W} $ vanishes identically on $ M $. 
\begin{theorem}
	A \GQE \ \NCPM\ is $ \mathcal{M}- $projective flat if and only if it is of generalized quasi-constant curvature. 
\end{theorem}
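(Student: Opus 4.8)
The plan is to argue entirely from the $(0,4)$ version (\ref{m-projec}) of the $\mathcal{M}$-projective tensor, feeding in the \GQE\ Ricci tensor (\ref{GQEexpression}). For the forward implication I would assume $\mathcal{W}\equiv 0$, so that (\ref{m-projec}) reduces to the assertion that $R(X_1,X_2,X_3,X_4)$ equals $\tfrac{1}{2(m-1)}$ times the bracketed Ricci expression on its right-hand side. Substituting (\ref{GQEexpression}) for each of the four Ricci factors and then sorting the outcome according to the scalar it carries is the whole of the computation.

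Tracking the three scalars separately is routine. The terms proportional to $\lambda$ furnish two copies of $g(X_2,X_3)g(X_1,X_4)-g(X_1,X_3)g(X_2,X_4)$, so that after the prefactor they contribute $\tfrac{\lambda}{m-1}$ times the $A$-block of (\ref{gen.quasiconscurv}). The terms carrying $(2p-\lambda)$ produce precisely the four products $g(X_1,X_4)\alpha_1(X_2)\alpha_1(X_3)$, $-g(X_1,X_3)\alpha_1(X_2)\alpha_1(X_4)$, $g(X_2,X_3)\alpha_1(X_1)\alpha_1(X_4)$ and $-g(X_2,X_4)\alpha_1(X_1)\alpha_1(X_3)$, i.e.\ the four summands of the $B$-block; the $(2q-\lambda)$ terms do the same for the $C$-block. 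Hence $R$ has the form (\ref{gen.quasiconscurv}) with $A=\tfrac{\lambda}{m-1}$, $B=\tfrac{2p-\lambda}{2(m-1)}$ and $C=\tfrac{2q-\lambda}{2(m-1)}$, which is generalized quasi-constant curvature. Because every step is an algebraic identity, the converse is obtained by reading the same computation in reverse: inserting (\ref{gen.quasiconscurv}) with these $A,B,C$ into (\ref{m-projec}) collapses the right-hand side back to $R$, giving $\mathcal{W}\equiv 0$.

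The step I expect to be the genuine obstacle is upgrading the converse from a formal reversal to an honest equivalence. A \NCPM\ ``of generalized quasi-constant curvature'' carries its own scalars $A,B,C$, and one must show these are forced to equal the specific values found above. My approach would be to contract (\ref{gen.quasiconscurv}) exactly as in the Section~4 theorem to read off the induced Ricci tensor, to match it against (\ref{GQEexpression}) so as to relate $A,B,C$ to $\lambda$, and then to invoke the normal-metric-contact-pair curvature identities---chiefly $R(X_1,Z)Z=-\phi^2X_1$ from (\ref{R(X,Z)Z}) together with the normalizations (\ref{Ric(X_1,Z)}) and (\ref{Ric(Z,Z)})---to pin the three parameters down. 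Checking that this system of constraints is consistent and really does collapse to $A=\tfrac{\lambda}{m-1}$, $B=\tfrac{2p-\lambda}{2(m-1)}$, $C=\tfrac{2q-\lambda}{2(m-1)}$ is the delicate bookkeeping on which the ``if and only if'' ultimately rests.
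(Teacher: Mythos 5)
Your first two paragraphs reproduce the paper's entire proof: it substitutes (\ref{GQEexpression}) into (\ref{m-projec}) to obtain (\ref{mprojectflat}) and reads off that $\mathcal{W}=0$ precisely when $R$ has the form (\ref{gen.quasiconscurv}) with $A=\frac{\lambda}{m-1}$, $B=\frac{2p-\lambda}{2(m-1)}$, $C=\frac{2q-\lambda}{2(m-1)}$. The paper never attempts the unqualified converse that you worry about in your final paragraph; its closing sentence builds the three coefficients into the conclusion, so what is actually proved there is only the coefficient-specific equivalence, and on that portion your argument and the paper's coincide.

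The obstacle you isolate is therefore genuine, but the program you sketch for closing it would fail, because the bookkeeping does not collapse to the values above. Contracting (\ref{gen.quasiconscurv}) as in the Section~4 theorem gives $Ric=[A(m-1)+B+C]\,g+B(m-2)\,\alpha_1\otimes\alpha_1+C(m-2)\,\alpha_2\otimes\alpha_2$, and the normalizations $Ric(Z_1,Z_1)=2p$, $Ric(Z_2,Z_2)=2q$ of a \NCPM\ then force $B=\frac{2p-\lambda}{m-2}$ and $C=\frac{2q-\lambda}{m-2}$, where $\lambda=A(m-1)+B+C$. These match the flatness values $\frac{2p-\lambda}{2(m-1)}$, $\frac{2q-\lambda}{2(m-1)}$ only when $2p-\lambda=2q-\lambda=0$, since $m-2\neq 2(m-1)$. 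The same degeneracy is visible without any contact geometry: tracing $\mathcal{W}=0$ in (\ref{m-projec}) over $X_2=X_3$ yields $Ric=\frac{scal}{m}\,g$, so an $\mathcal{M}$-projective flat manifold is necessarily Einstein, and by (\ref{GQEexpression}) a \GQE\ \NCPM\ is Einstein only when $\lambda=2p=2q$, i.e.\ when $p=q$ and both $1$-form terms vanish. Hence for a manifold that is generalized quasi-Einstein in the strict sense (nonzero $\beta$, $\mu$) the ``only if'' direction is vacuous and the unqualified ``if'' direction is false; the biconditional survives only in the coefficient-specific form the paper actually proves, and your contraction strategy, carried out honestly, exposes rather than repairs this defect in the statement.
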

\begin{proof}
	Suppose that $ M $ is a \GQE \ manifold. Then, from (\ref{GQEexpression}) and (\ref{m-projective}) we have 
	\begin{eqnarray}\label{mprojectflat}
	\mathcal{W}( X_1,X_2, X_3 , X_4)&=&R(X_1,X_2,X_3,X_4)\\&&-\frac{\lambda}{m-1}[g(X_2,X_3)g(X_1,X_4)-g(X_1,X_3)g(X_2,X_4)]\notag\\&&- \frac{2p-\lambda}{2(m-1)}[g(X_1,X_4)\alpha_1(X_2)\alpha_1(X_3)-g(X_1,X_3)\alpha_1(X_2)\alpha_1(X_4)\notag\\&&+g(X_2,X_3)\alpha_1(X_1)\alpha_1(X_4)-g(X_2,X_4)\alpha_1(X_1)\alpha_1(X_3)]\notag\\&&- \frac{2q-\lambda}{2(m-1)}[g(X_1,X_4)\alpha_2(X_2)\alpha_2(X_3)-g(X_1,X_3)\alpha_2(X_2)\alpha_2(X_4)\notag\\&&+g(X_2,X_3)\alpha_2(X_1)\alpha_2(X_4)-g(X_2,X_4)\alpha_2(X_1)\alpha_2(X_3)]\notag.
	\end{eqnarray}
	Thus it is seen that $ M $ is $ \mathcal{M}- $projective flat if and only if  $ M $ is of generalized quasi-constant curvature with coefficients $ A= \frac{\lambda}{m-1}, \ B=\frac{2p-\lambda}{2(m-1)}$  and $ C=\frac{2q-\lambda}{2(m-1)}$.
\end{proof}
The Riemann manifolds satisfying $ R(X_1,X_2).R=0 $ are called semi-symmetric, where $ R(X_1,X_2) $ acts on $ R $ as a derivation. Semi-symmetric contact manifolds were studied  by Perrone \cite{contactRiemanPerrone}.  Similarly, if $ \mathcal{W}(X_1,X_2).R=0$ then $ M $ is called $ \mathcal{M}- $projective semi-symmetric.  $ \mathcal{W}(X_1,X_2).R$ is defined as 
\begin{eqnarray}\label{M.Rgeneral}
(\mathcal{W}(X_1,X_2).R)(X_3,X_4)X_5&= \mathcal{W}(X_1,X_2).R(X_3,X_4)X_5-R(\mathcal{W}(X_1,X_2),X_3,X_4)X_5\\&
-R(X_3,\mathcal{W}(X_1,X_2)X_4)X_5-R(X_3,X_4)\mathcal{W}(X_1,X_2)X_5\notag 
\end{eqnarray}
for all $ X_1,X_2,X_3,X_4,X_5\in \Gamma(TM) $.  Also, we have 
\begin{eqnarray}\label{M.Ricgeneral}
(\mathcal{W}(X_1,X_2).Ric)(X_3,X_4)=-Ric(\mathcal{W}(X_1,X_2),X_3)-Ric(X_3,\mathcal{W}(X_1,X_2)X_4). 
\end{eqnarray}
If $ \mathcal{W}(X_1,X_2).Ric=0 $ then $ M $ is called $ \mathcal{M}- $projective Ricci semi-symmetric. 
\begin{theorem}
	A \NCPM \ is $ \mathcal{M}- $projective semi-symmetric if and only if $ M $ is \GQE \ manifold. 
\end{theorem}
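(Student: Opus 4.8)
The plan is to establish both implications using the reduced expressions (\ref{M(X,Z)Z})--(\ref{M(X,Z)Y}) for the $\mathcal{M}$-projective operator evaluated against the Reeb field, together with the curvature identities (\ref{R(X,Z)Y})--(\ref{R(X,Z)Z}). A preliminary observation I would record first is that, by its definition (\ref{m-projec}), the tensor $\mathcal{W}(X_1,X_2,X_3,X_4)$ is skew-symmetric in its last two arguments exactly as the Riemann tensor is; equivalently $g(\mathcal{W}(X_1,X_2)X_3,X_4)+g(X_3,\mathcal{W}(X_1,X_2)X_4)=0$. This antisymmetry is the structural fact that makes the metric part of a generalized quasi-Einstein Ricci tensor drop out of the derivation, and it drives the reverse implication.

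For the direction ``$\mathcal{M}$-projective semi-symmetric $\Rightarrow$ \GQE'', I would first contract the identity $(\mathcal{W}(X_1,X_2).R)(X_3,X_4)X_5=0$ from (\ref{M.Rgeneral}). Since the derivation action commutes with metric contraction, this yields the Ricci-level identity $(\mathcal{W}(X_1,X_2).Ric)(X_3,X_4)=0$ of (\ref{M.Ricgeneral}), namely $Ric(\mathcal{W}(X_1,X_2)X_3,X_4)+Ric(X_3,\mathcal{W}(X_1,X_2)X_4)=0$. I would then substitute $X_2=X_4=Z$ with $X_1,X_3$ horizontal and feed in (\ref{M(X,Z)Y}) for $\mathcal{W}(X_1,Z)X_3$ (which is a multiple of $Z$) and (\ref{M(X,Z)Z}) for $\mathcal{W}(X_1,Z)Z$, using $Ric(Z,Z)=m-2$ from (\ref{Ric(Z,Z)}) and $Ric(X,Z)=0$ on horizontal inputs from (\ref{Ric(X_1,Z)}). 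The outcome is an algebraic relation tying $Ric(X_1,X_3)$ and $g(X_1,X_3)$ to the quadratic term $g(QX_1,QX_3)$ on $\mathcal{H}$; combined with the full semi-symmetry hypothesis this forces $Q=\lambda I$ on $\mathcal{H}$, i.e. $Ric(X_1^{\mathcal H},X_3^{\mathcal H})=\lambda\,g(X_1^{\mathcal H},X_3^{\mathcal H})$, whereupon Proposition \ref{horizontaliseinstein} delivers the \GQE\ conclusion with the canonical coefficients of (\ref{GQEexpression}).

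For the converse ``$\GQE \Rightarrow \mathcal{M}$-projective semi-symmetric'', I would start from the generalized quasi-Einstein form (\ref{GQEexpression}) of $Ric$ and evaluate the derivation directly. Writing $Ric=\lambda g+(2p-\lambda)\alpha_1\otimes\alpha_1+(2q-\lambda)\alpha_2\otimes\alpha_2$, the $\lambda g$ contribution to the derivation cancels identically by the antisymmetry noted above, leaving only the two rank-one pieces, which are governed by the quantities $\alpha_i(\mathcal{W}(X_1,X_2)X_3)=\mathcal{W}(X_1,X_2,X_3,Z_i)$. Using (\ref{M(X,Y)Z}), the reduced form (\ref{m-projec}), and the vanishing of $R(\cdot,\cdot)Z$ and of $Ric(\cdot,Z_i)$ on horizontal inputs, I would show each such term vanishes, so that the full derivation $\mathcal{W}.R$ collapses once the surviving contractions are pushed onto the Reeb directions and evaluated through (\ref{R(X,Z)Y})--(\ref{R(X,Z)Z}).

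The principal obstacle is the bookkeeping forced by the two Reeb fields: the reduced identities (\ref{R(X,Z)Y})--(\ref{R(X,Z)Z}) and (\ref{M(X,Z)Z})--(\ref{M(X,Z)Y}) are phrased in terms of the single combined field $Z=Z_1+Z_2$, whereas the generalized quasi-Einstein structure and the forms $\alpha_1,\alpha_2$ require separating the $Z_1$- and $Z_2$-contributions. I expect to rely on the componentwise values $Ric(Z_1,Z_1)=2p$, $Ric(Z_2,Z_2)=2q$, $Ric(Z_1,Z_2)=0$ and on $Ric(X^{\mathcal H},Z_i)=0$ to close this gap cleanly. A secondary difficulty is the quadratic term $g(QX_1,QX_3)$ appearing in the forward direction: I must argue it contributes no spurious second eigenvalue, which is precisely where passing from the contracted $\mathcal{W}.Ric$ identity back to the full $\mathcal{W}.R$ hypothesis (or an orthonormal-basis summation) becomes decisive.
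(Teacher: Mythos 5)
Your preliminary observation that $\mathcal{W}$ is skew-symmetric in its last two slots is correct, and the resulting contraction from $\mathcal{W}(X_1,Z).R=0$ to $\mathcal{W}(X_1,Z).Ric=0$ is legitimate --- but it discards exactly the leverage the forward implication needs. Evaluating $(\mathcal{W}(X_1,Z).Ric)(X_3,Z)=0$ on horizontal $X_1,X_3$ via (\ref{M(X,Z)Y}), (\ref{M(X,Z)Z}), (\ref{Ric(X_1,Z)}) and (\ref{Ric(Z,Z)}) yields, as you concede, a relation of the shape $g(QX_1,QX_3)=a\,Ric(X_1,X_3)+b\,g(X_1,X_3)$ on $\mathcal{H}$, i.e.\ $Q^2=aQ+bI$ there. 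This only bounds the number of horizontal Ricci eigenvalues by two; it does not force $Q=\lambda I$, and your phrase ``combined with the full semi-symmetry hypothesis this forces $Q=\lambda I$'' is precisely the content of the theorem left unargued. The paper never contracts: it computes the uncontracted derivation $(\mathcal{W}(X_1,Z).R)(X_3,X_4)X_5$ from (\ref{M.Rgeneral}) and specializes $X_4=Z$ with $X_1,X_3,X_5$ horizontal, whereupon (\ref{R(X,Z)Y})--(\ref{R(X,Z)Z}) kill every curvature term except one proportional to $\left(Kg(X_1,X_5)+LRic(X_1,X_5)\right)X_3$, with $K=\frac{(2m-1)(m-2)}{2(m-1)}$, $L=\frac{1}{2(m-1)}$. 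That condition is \emph{linear} in $Ric$, so its vanishing immediately gives $Ric=-\frac{K}{L}g$ on $\mathcal{H}$, and Proposition \ref{horizontaliseinstein} delivers (\ref{GQEinproof}). To exclude your spurious second eigenvalue you would have to return to the tensor level anyway --- that is, essentially redo the paper's computation --- so the contraction step buys nothing and leaves the crux open.

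The converse as you sketch it also fails as stated. Controlling the quantities $\alpha_i(\mathcal{W}(X_1,X_2)X_3)$ and killing the rank-one pieces of the \GQE\ Ricci tensor proves at best $\mathcal{W}.Ric=0$ (the level of (\ref{M.Ricgeneral})), not $\mathcal{W}.R=0$: the derivation in (\ref{M.Rgeneral}) contains terms such as $R(\mathcal{W}(X_1,X_2)X_3,X_4)X_5$ with all slots horizontal, and the \GQE\ hypothesis constrains only $Ric$, never the horizontal curvature, so these terms cannot ``collapse'' by pushing contractions onto the Reeb directions. Moreover, even the specialized expression $\left(Kg+LRic\right)$ does not vanish on $\mathcal{H}$ for a general \GQE\ structure --- it vanishes only for the single value $\lambda=-\frac{K}{L}$ forced in (\ref{GQEinproof}) --- so your claimed term-by-term vanishing is false for arbitrary $\lambda$. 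To be fair, the paper's own converse is thin: it verifies vanishing only at the specialization $X_4=Z$ and for that specific $\lambda$, and does not check the purely horizontal components of $\mathcal{W}.R$ either. But where the paper leaves that direction under-verified, your proposal asserts a cancellation that is actually incorrect, and your (well-identified) $Z_1,Z_2$-bookkeeping obstacle is secondary to these two structural gaps.
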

\begin{proof}
	From (\ref{M.Rgeneral}) and using (\ref{M(X,Z)Z}), (\ref{M(X,Y)Z}) and (\ref{M(X,Z)Y}) we obtain 
	\begin{eqnarray*}
		(\mathcal{W}(X_1,Z).R)(X_3,X_4)X_5&=&KR(X_1,X_3,X_4,X_5)Z+LR(X_3,X_4,X_5,QX_1)Z\\&&-(Kg(X_1,X_3)+LRic(X_1,X_3))g(X_4,X_5)Z\\&&+(Kg(X_1,X_5)+LRic(X_1,X_5))R(X_3,X_4)Z
	\end{eqnarray*}
	where $ K=\frac{(2m-1)(m-2)}{2(m-1)} $ and $ L= \frac{1}{2(m-1)}$. \par
	Let take $ X_1,X_3,X_4,X_5 $ horizontal vector fields and $ X_4=Z $, then from (\ref{R(X,Z)Y}),  (\ref{R(X,Y,Z,V)}), (\ref{R(X,Z)Z}), (\ref{Ric(X_1,Z)}) and (\ref{Ric(Z,Z)}), then, we get 
	\begin{eqnarray*}
		(\mathcal{W}(X_1,Z).R)(X_3,Z)X_5=-(Kg(X_1,X_5)+LRic(X_1,X_5))X_3.
	\end{eqnarray*}
	Thus, we conclude that $ (\mathcal{W}(X_1,Z).R)(X_3,X_4)X_5=0 $ if and only if horizontal bundle of  $ M $ is Einstein. Take into consideration of  Proposition \ref{horizontaliseinstein}, we obtain 
	\begin{eqnarray}\label{GQEinproof}
	Ric(X_1,X_5)=-\frac{K}{L}g(X_1,X_5)
	+(2p+\frac{K}{L})\alpha_1(X_1)\alpha_1(X_5)+(2q+\frac{K}{L})\alpha_2(X_1)\alpha_2(X_5) 
	\end{eqnarray}
	So, the manifold is \GQE . 
\end{proof}
\begin{theorem}
	An $ (m=2p+2q+2) $-dimensional \NCPM \ admitting an $ \mathcal{M}- $projective curvature
	tensor and a nonzero Ricci tensor $ Ric $ satisfies the equality $ \mathcal{W}.Ric =0 $ if and only $ M $ is \GQE \ manifold.
\end{theorem}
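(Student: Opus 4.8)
The plan is to read both implications off the pointwise derivation identity (\ref{M.Ricgeneral}), which says that $\mathcal{W}.Ric=0$ is equivalent to the $\mathcal{M}$-projective operator $\mathcal{W}(X_1,X_2)$ being skew-symmetric with respect to $Ric$, namely
$$Ric(\mathcal{W}(X_1,X_2)X_3,X_4)+Ric(X_3,\mathcal{W}(X_1,X_2)X_4)=0 .$$
Everything will be extracted from this one identity after feeding it the Reeb field $Z=Z_1+Z_2$ and the splitting $TM=\mathcal{H}\oplus\mathcal{V}$.

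For the forward direction I would set $X_2=Z$ and probe the identity with $X_3$ horizontal and $X_4=Z$. The first summand is governed by (\ref{M(X,Z)Y}), which makes $\mathcal{W}(X_1,Z)X_3$ a multiple of $Z$; pairing it with $Z$ then produces the factor $Ric(Z,Z)=2p+2q=m-2$ coming from (\ref{Ric(Z,Z)}). The second summand is governed by (\ref{M(X,Z)Z}), and (\ref{Ric(X_1,Z)}) kills the mixed horizontal--vertical Ricci terms. The goal is to collapse the identity to a relation in $Ric(X_1,X_3)$ for horizontal $X_1,X_3$ forcing $Ric(X_1,X_3)=\lambda\,g(X_1,X_3)$ on $\mathcal{H}$, i.e. that the horizontal bundle is Einstein; Proposition \ref{horizontaliseinstein} then upgrades this at once to the \GQE\ form (\ref{GQEexpression}).

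For the converse I would assume (\ref{GQEexpression}) and verify the skew-symmetry identity directly. The key observation is that the $(0,4)$ tensor $\mathcal{W}$ is skew in its last two arguments: this follows from the skew symmetry of $R$ together with the fact that the correction term in (\ref{m-projec}) is itself antisymmetric under $X_3\leftrightarrow X_4$. Hence $g(\mathcal{W}(X_1,X_2)X_3,X_4)+g(X_3,\mathcal{W}(X_1,X_2)X_4)=0$, so the $\lambda\,g$ part of $Ric$ in (\ref{GQEexpression}) contributes nothing, and it remains only to show that the two pieces built from $\alpha_1$ and $\alpha_2$ cancel. This reduces to evaluating $\alpha_i(\mathcal{W}(X_1,X_2)X_j)=\mathcal{W}(X_1,X_2,X_j,Z_i)$ and invoking (\ref{M(X,Y)Z}) together with the Reeb--Ricci identities.

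The main obstacle I anticipate is the separate treatment of the two Reeb fields. The curvature and projective formulas (\ref{R(X,Z)Y})--(\ref{R(X,Z)Z}) and (\ref{M(X,Z)Z})--(\ref{M(X,Z)Y}) are all phrased through the combined field $Z=Z_1+Z_2$, whereas both the \GQE\ form (\ref{GQEexpression}) and the one-forms $\alpha_1,\alpha_2$ genuinely distinguish $Z_1$ from $Z_2$ (and, since $p\neq q$ in general, these vertical directions behave differently). In the forward direction one must also confront the Ricci operator $Q$ entering through $\mathcal{W}(X_1,Z)Z$ in (\ref{M(X,Z)Z}); contracting this again against $Ric$ threatens to introduce a quadratic $Q^2$-term on $\mathcal{H}$, and the delicate point is to show, using the scalar-curvature relation (\ref{scalar}) as a trace constraint, that this forces a single eigenvalue---a genuine Einstein constant $\lambda$---rather than a two-valued spectrum. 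Getting the $Z_1/Z_2$ bookkeeping right is precisely what makes the equivalence go through.
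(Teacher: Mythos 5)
Your overall route coincides with the paper's: both proofs start from the derivation identity (\ref{M.Ricgeneral}) with $X_2=Z$, evaluate on one horizontal and one Reeb argument, reduce $\mathcal{W}.Ric=0$ to the statement that the horizontal bundle is Einstein, and then invoke Proposition \ref{horizontaliseinstein} to pass to the \GQE\ form (your probe $X_3\in\Gamma(\mathcal{H})$, $X_4=Z$ is the mirror image of the paper's choice $X_3=Z$, $X_4\in\Gamma(\mathcal{H})$, so the computation is the same up to the symmetry of $Ric$). The place where you diverge is the summand $Ric(X_3,\mathcal{W}(X_1,Z)Z)$. The paper disposes of it by substituting the Reeb field into (\ref{M(X,Z)Y}), formally getting $\mathcal{W}(X_1,Z)Z=[Kg(X_1,Z)+LRic(X_1,Z)]Z=0$ for horizontal $X_1$ via (\ref{Ric(X_1,Z)}); its displayed equation is then linear, $(\mathcal{W}(X_1,Z).Ric)=-(2p+2q)[Kg(X_1,X_4)+LRic(X_1,X_4)]$, and the equivalence with horizontal-Einstein is read off immediately. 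You instead use (\ref{M(X,Z)Z}), which is the formula actually valid when the third slot is $Z$, and you are therefore left with the term $Ric(X_3,QX_1)$. You have correctly located a step that the paper passes over silently (note that (\ref{M(X,Z)Y}) extended to $X_2=Z$ contradicts (\ref{M(X,Z)Z}) unless $Q|_{\mathcal{H}}=\tfrac{m}{2}\,\mathrm{id}$, so the two formulas cannot both be applied there).

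However, your proposed resolution of that term does not work, and this is a genuine gap. The trace relation (\ref{scalar}) is \emph{derived from} the \GQE\ expression (\ref{GQEexpression}); invoking it in the forward direction, before \GQE\ is established, is circular. Even granting some trace constraint, it cannot force a single eigenvalue: your pointwise identity says every eigenvalue $\rho$ of $Q|_{\mathcal{H}}$ satisfies one fixed quadratic equation, which is perfectly compatible with a two-valued spectrum whose multiplicities realize any prescribed trace, so "horizontal Einstein" does not follow from the tools you cite. Your converse also does not close as sketched: the skewness of $\mathcal{W}$ in its last two slots (which you verify correctly) kills only the $\lambda g$ part of (\ref{GQEexpression}), leaving $(2p-\lambda)\left[\mathcal{W}(X_1,X_2,X_3,Z_1)\alpha_1(X_4)+\alpha_1(X_3)\mathcal{W}(X_1,X_2,X_4,Z_1)\right]$ plus the analogous $\alpha_2$ block; but (\ref{M(X,Y)Z}) computes $\mathcal{W}(X_1,X_2)Z$, not the components $g(\mathcal{W}(X_1,X_2)X_3,Z_i)$, and these do not vanish identically, so the required cancellation is asserted rather than proved. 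For comparison, the paper's converse consists only of observing that its single displayed component vanishes when $Ric|_{\mathcal{H}}=-(K/L)\,g|_{\mathcal{H}}$ --- much weaker than full $\mathcal{W}.Ric=0$, but that is all it checks; if you adopt the paper's convention of testing only those slots, your argument closes the same way, except that the quadratic $Q$-term you honestly retained must still be dealt with, and neither your proposal nor the paper actually does so.
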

\begin{proof}
	For $ X_1,X_3,X_4 \in \Gamma(TM) $ from (\ref{M.Ricgeneral}) we get 
	\begin{eqnarray*}
		(\mathcal{W}(X_1,Z).Ric)(X_3,X_4)&=&-Ric([Kg(X_1,X_3)+LRic(X_1,X_3)]Z,X_4)\\&&-Ric(X_3,[Kg(X_1,X_4)+LRic(X_1,X_4)]Z). 
	\end{eqnarray*}
	Let take $ X_1,X_4 $ horizaontal vector fields and $ X_3=Z $ from (\ref{Ric(X_1,Z)}), (\ref{Ric(Z,Z)}), we obtain 
	\begin{eqnarray*}
		(\mathcal{W}(X_1,Z).Ric)(X_3,X_4)=-(2p+2q)[Kg(X_1,X_4)+LRic(X_1,X_4)].
	\end{eqnarray*}
	Therefore,  $ (\mathcal{W}(X_1,Z).Ric)(X_3,X_4)=0 $ if and only if horizontal bundle is Einstein. From Proposition (\ref{horizontaliseinstein}) we get (\ref{GQEinproof}), which completes the proof. 	
\end{proof}
Conformal $ \mathcal{C} $ and concircular $ \mathcal{Z} $ on a $(m=2p+2q+2)$-dimensional normal contact metric pair manifold are given by: 
\begin{eqnarray*}
	\mathcal{C}( X_1,X_2) X_3 &=&R( X_1,X_2) X_3\\
	&&+\frac{scal}{(
		m-1) (m-2)}\left( g(X_2,X_3)X_1-g(X_1,X_3)X_2\right)  \label{conformal} \notag\\
	&&+\frac{1}{m-2}( g( X_1,X_3) QX_2-g( X_2,X_3) QX_1\notag\\
	&&+Ric(
	X_1,X_3) X_2-Ric( X_2,X_3) X_1) , \notag\\
	\mathcal{Z}( X_1,X_2) X_3&=&R( X_1,X_2) X_3-\frac{scal}{	m (m-1) }[ g(X_2,X_3)X_1-g(X_1,X_3)X_2] \notag 
	\label{concircular} \\
\end{eqnarray*}
where  $ X_1,X_2,X_3 \in \Gamma(TM) $. Conformal and concircular curvature tensors on contact manifolds have been studied in \cite{unal2018concircular,unal2019some,turgut2017conformal}. 

Blair, Bande and Hadjar \cite{bande2015bochner} studied on conformal flatness of \NCPM s and they proved following theorem. 
\begin{theorem}
	A conformal flat \NCPM \ is locally isometric to the Hopf manifold $  S^{2q+1}(1) \times S^1 $ \cite{bande2015bochner}.
\end{theorem}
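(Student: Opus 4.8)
The plan is to exploit the vanishing of the conformal curvature tensor $\mathcal{C}$ to rewrite the full Riemann tensor in terms of $Ric$ and $scal$, and then to feed in the intrinsic curvature identities of a \NCPM\ to pin down $Ric$ completely. Concretely, setting $\mathcal{C}=0$ in its defining formula expresses $R(X_1,X_2)X_3$ through $Ric$, $Q$, $g$ and $scal$. Taking $X_1=X$ horizontal and $X_2=X_3=Z$ and comparing with the \NCPM\ identity (\ref{R(X,Z)Z}), which gives $R(X,Z)Z=-\phi^2X=X$ for horizontal $X$, together with (\ref{Ric(X_1,Z)}) and (\ref{Ric(Z,Z)}), I would deduce that $QX=\tfrac{scal}{m-1}X$ for every horizontal $X$. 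That is exactly the statement that the horizontal bundle is Einstein, so by Proposition \ref{horizontaliseinstein} the manifold is \GQE\ with $\lambda=\tfrac{scal}{m-1}$.

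The first consequence is pivotal: combining $\lambda=\tfrac{scal}{m-1}$ with the scalar-curvature relation (\ref{scalar}) forces $\lambda=2(p+q)$, hence $scal$ is constant. Since $m=2p+2q+2\ge 4$, the vanishing of $\mathcal{C}$ also makes the Cotton tensor vanish, and with $scal$ constant this says precisely that $Ric$ is of Codazzi type. But a genuine \GQE\ \NCPM\ cannot have Codazzi Ricci tensor; re-reading the mechanism behind that impossibility, the obstruction is exactly the non-closedness of the contact forms, and inspecting the two branches $X_3=Z_1$ and $X_3=Z_2$ with $\lambda=2(p+q)$ shows that the Codazzi condition can survive only if one of $d\alpha_1,d\alpha_2$ vanishes, i.e. (after possibly interchanging the two factors) $p=0$. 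Thus conformal flatness degenerates the pair to the type carrying a single Sasakian factor.

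With $p=0$ we have $\phi_1=0$, so $\nabla_X Z_1=-\phi_1 X=0$ and $Z_1$ is a parallel unit field; by the de Rham decomposition $M$ splits locally as $\mathbb{R}\times N$, where the line is the $Z_1$-orbit (the $S^1$ factor) and $N$ is the $(2q+1)$-dimensional leaf carrying the induced normal contact metric (Sasakian) structure. On $N$ the \GQE\ form (\ref{GQEexpression}) collapses to $Ric=2q\,g$, so $N$ is Einstein; substituting this back into the conformal-flatness expression for $R$ and using $m=2q+2$, $scal=(2q+1)2q$, one finds $R(X,Y)W=g(Y,W)X-g(X,W)Y$ on $N$, i.e. $N$ has constant curvature $1$ and is locally isometric to $S^{2q+1}(1)$. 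Hence $M$ is locally isometric to $S^{2q+1}(1)\times S^1$.

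I expect the genuine obstacle to be the middle step, namely extracting $p=0$: it requires recognizing that the earlier ``no Codazzi'' theorem for \GQE\ \NCPM s silently assumes a non-degenerate pair ($p,q\ge1$), and that conformal flatness is compatible only with the boundary case in which one characteristic form becomes closed. The subsequent identification as a space form is then a routine substitution into the conformal-flatness identity, and the de Rham argument supplies the local product structure.
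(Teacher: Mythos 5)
First, a framing point: the paper offers no proof of this theorem — it is quoted from \cite{bande2015bochner} — so the only meaningful comparison is with the argument in that reference, and yours is genuinely different. Bande, Blair and Hadjar argue structurally: by \cite{bande2010normal} a \NCPM\ with decomposable $\phi$ is locally a Riemannian product of two Sasakian manifolds, and a conformally flat Riemannian product must either have two factors of opposite constant curvature or a one-dimensional factor next to a constant-curvature factor; since a Sasakian factor of constant curvature necessarily has curvature $+1$ (planes containing the Reeb field have curvature one, cf.\ (\ref{R(X,Z)Z})), the first alternative is impossible, so one of $p,q$ vanishes and the big factor is locally $S^{2q+1}(1)$. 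You instead run everything through the \GQE\ machinery of the present paper, and your steps check out: evaluating $\mathcal{C}=0$ on $(X,Z,Z)$ against (\ref{R(X,Z)Z}), (\ref{Ric(X_1,Z)}) and (\ref{Ric(Z,Z)}) gives $QX=\frac{scal}{m-1}X$ on $\mathcal{H}$ (the $Ric(Z,Z)=m-2$ term exactly cancels the left-hand side $X$), hence \GQE\ by Proposition \ref{horizontaliseinstein}; combining with (\ref{scalar}) pointwise forces $\lambda=m-2$ and $scal=(m-1)(m-2)$, constant; vanishing of the conformal tensor in dimension $m\geq 4$ kills the Cotton tensor, and constancy of $scal$ then makes $Ric$ Codazzi; the Section 5 branches $X_3=Z_1$ and $X_3=Z_2$ give $(2p-\lambda)\,d\alpha_1=0$ and $(2q-\lambda)\,d\alpha_2=0$, i.e.\ $q\,d\alpha_1=0$ and $p\,d\alpha_2=0$, so $p=0$ or $q=0$; and the endgame ($\phi_1=0$, hence $Z_1$ parallel, de Rham splitting, $Ric=2q\,g$ on the leaf, substitution back into $\mathcal{C}=0$ yielding constant curvature $1$) is a correct computation, which I verified. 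What each route buys: theirs never differentiates $Ric$ and is short once the local-product theorem is in hand; yours is self-contained relative to this paper's toolkit and, valuably, exposes that Theorem 5.2 tacitly assumes $p,q\geq 1$ — under the paper's stated definition of a contact pair ($p,q$ positive integers) the present theorem would in fact be vacuous, so the degenerate types with one closed form must be admitted, exactly as in \cite{bande2015bochner}. If you write this up, make two small points explicit: the implication $\mathcal{C}=0\Rightarrow \mathrm{Cotton}=0$ requires $m\geq 4$ (automatic here, since $p+q\geq 1$), and the parallelism of $Z_1$ requires extending $\nabla_X Z_1=-\phi_1 X$ from $X$ tangent to the characteristic foliations to all of $TM$, which follows by linearity together with $\nabla_{Z_i}Z_j=0$.
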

Thus we get following results, for a \GQE \ \NCPM. 
\begin{theorem}
Let $ M $ be a \GQE \ \NCPM. If $ M $ is of generalized quasi-constant curvature  with coefficients $ A=-\frac{\lambda m-m+2}{(m-1)(m-2)} , \ B=\frac{2p-\lambda}{m-2}$ and $ C= \frac{2q-\lambda}{m-2}$, then it is  locally isometric to the Hopf manifold $  S^{2q+1}(1) \times S^1 $.
\end{theorem}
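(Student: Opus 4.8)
The plan is to reduce the statement to the conformal flatness theorem of Blair, Bande and Hadjar quoted just above. Concretely, I would show that under the two hypotheses---that $M$ is \GQE\ and that $M$ has generalized quasi-constant curvature with the indicated coefficients---the conformal curvature tensor $\mathcal{C}$ vanishes identically. Once $M$ is conformally flat, the cited theorem immediately gives that $M$ is locally isometric to the Hopf manifold $S^{2q+1}(1)\times S^1$, which is exactly the desired conclusion.

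The core of the argument is a direct substitution. First I would insert the generalized quasi-constant curvature expression (\ref{gen.quasiconscurv}) for $R$, the \GQE\ Ricci tensor (\ref{GQEexpression}) for $Ric$, the induced Ricci operator $QX=\lambda X+(2p-\lambda)\alpha_1(X)Z_1+(2q-\lambda)\alpha_2(X)Z_2$, and the scalar curvature (\ref{scalar}) (equivalently $scal=(m-2)(\lambda+1)$, since $m-2=2p+2q$) into the defining formula for $\mathcal{C}(X_1,X_2)X_3$. Then I would sort the resulting expression according to its tensorial type: the pure metric terms $g(X_2,X_3)X_1-g(X_1,X_3)X_2$; the quadratic terms $\alpha_i(X_2)\alpha_i(X_3)X_1-\alpha_i(X_1)\alpha_i(X_3)X_2$ for $i=1,2$; and the mixed, Reeb-valued terms $\bigl(g(X_2,X_3)\alpha_i(X_1)-g(X_1,X_3)\alpha_i(X_2)\bigr)Z_i$ that arise from the $Z_1,Z_2$ components of $QX_1$ and $QX_2$. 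Collecting coefficients, the $\alpha_i$-quadratic terms cancel precisely when $B=\tfrac{2p-\lambda}{m-2}$ and $C=\tfrac{2q-\lambda}{m-2}$; these same two relations also kill the Reeb-valued terms; and the pure metric part cancels exactly when $A$ takes the stated value, the scalar-curvature identity entering at this last step.

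I expect the bookkeeping of the mixed Reeb-valued terms to be the main obstacle. These terms proportional to $Z_1$ and $Z_2$ do not appear in $R$ in any obvious curvature guise but are produced by the Ricci-operator pieces $g(X_1,X_3)QX_2-g(X_2,X_3)QX_1$, and they must cancel against the off-diagonal $g(X_j,X_3)\alpha_i(X_k)Z_i$ contributions coming from the generalized quasi-constant curvature form of $R$. Verifying that these two sources annihilate each other under the same relations that handle the $\alpha_i$-quadratic terms is the delicate consistency check; once it is in place, matching the pure metric part (where the relation $scal=(m-2)(\lambda+1)$ is used, and where I would carefully track the sign of the coefficient $A$) is routine. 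With $\mathcal{C}\equiv 0$ established, the conformal flatness theorem closes the argument.
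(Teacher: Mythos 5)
Your proposal follows essentially the same route as the paper's own proof: substitute the generalized quasi-Einstein Ricci tensor (\ref{GQEexpression}), the scalar curvature (\ref{scalar}), and the generalized quasi-constant curvature form (\ref{gen.quasiconscurv}) of $R$ into the conformal curvature tensor, check that all terms cancel for the given coefficients so that $\mathcal{C}\equiv 0$, and then invoke the Blair--Bande--Hadjar theorem to conclude local isometry with the Hopf manifold $S^{2q+1}(1)\times S^{1}$. Your instinct to track the sign of $A$ carefully is well placed: carrying out the cancellation under the paper's stated convention for $\mathcal{C}$ yields $A=\frac{\lambda m-m+2}{(m-1)(m-2)}$, so the minus sign in the theorem's coefficient appears to be a typo that the paper's own computation (which also drops a term from the final $\alpha_2$-bracket) does not surface.
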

\begin{proof}
	Let $ M $ be \GQE \ \NCPM. \ Then, we have 
	\begin{eqnarray*}
		\mathcal{C}( X_1,X_2, X_3 , X_4)&=&R(X_1,X_2,X_3,X_4)\\&&+\frac{scal}{(m-1)(m-2)}[g(X_2,X_3)g(X_1,X_4)-g(X_1,X_3)g(X_2,X_4)]\\&&-\frac{2\lambda}{m-2}[g(X_2,X_3)g(X_1,X_4)-g(X_1,X_3)g(X_2,X_4)]\notag\\&& -\frac{2p-\lambda}{m-2}[g(X_1,X_4)\alpha_1(X_2)\alpha_1(X_3)-g(X_1,X_3)\alpha_1(X_2)\alpha_1(X_4)\notag\\&&+g(X_2,X_3)\alpha_1(X_1)\alpha_1(X_4)-g(X_2,X_4)\alpha_1(X_1)\alpha_1(X_3)]\notag\\&& -\frac{2q-\lambda}{m-2}[g(X_1,X_4)\alpha_2(X_2)\alpha_2(X_3)-g(X_1,X_3)\alpha_2(X_2)\alpha_2(X_4)\notag\\&&-g(X_2,X_4)\alpha_2(X_1)\alpha_2(X_3)]\notag.
	\end{eqnarray*}	
	Suppose that, $ M $ is of generalized quasi-constant curvature  with coefficients $ A=-\frac{\lambda m-m+2}{(m-1)(m-2)} , \ B=\frac{2p-\lambda}{m-2}$ and $ C= \frac{2q-\lambda}{m-2}$  . Then, we get $ \mathcal{C}=0 $  which means that $ M $ is conformal flat. Thus, from Theorem 6.4 $ M $ is locally isometric to the Hopf manifold $  S^{2q+1}(1) \times S^1 $. 
\end{proof}
By using the definition of $ \mathcal{M}- $projective curvature tensor and conformal curvature tensor, we have 
\begin{eqnarray}\label{relationWMC}
\mathcal{C}( X_1,X_2) X_3&=&\frac{2(m-1)}{m-2}\mathcal{W}( X_1,X_2) X_3-\frac{m}{m-2}R( X_1,X_2,)X_3\\&&+\frac{scal}{(m-1)(m-2)}[g(X_2,X_3)X_1-g(X_1,X_3)X_2].\notag
\end{eqnarray}
If $ M $ is a $ \mathcal{M}- $projective flat \NCPM.\ , then, from (\ref{relationWMC}) , $M $ is conformal flat if and only if 
\begin{eqnarray*}
	R( X_1,X_2,)X_3=\frac{scal}{	m (m-1) }[ g(X_2,X_3)X_1-g(X_1,X_3)X_2],
\end{eqnarray*}
which means $ M $ is concircular flat. Finally, we conclude that 
\begin{theorem}
	Let $ M $ be $ \mathcal{M}- $projective flat \NCPM . \ If $ M $ is also concircular flat then it is locally isometric to Hopf manifold $  S^{2q+1}(1) \times S^1 $. 
\end{theorem}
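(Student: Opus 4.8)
The plan is to reduce the two flatness hypotheses to a single conformal flatness statement and then invoke the result, already recorded above, that a conformal flat \NCPM\ is locally isometric to the Hopf manifold $S^{2q+1}(1)\times S^1$. The bridge is the linear identity (\ref{relationWMC}), which expresses the conformal curvature tensor $\mathcal{C}$ in terms of the $\mathcal{M}-$projective curvature tensor $\mathcal{W}$, the Riemann tensor $R$, and a scalar-curvature term, and which holds on any \NCPM.

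First I would translate each hypothesis into a pointwise statement. The $\mathcal{M}-$projective flatness means $\mathcal{W}(X_1,X_2)X_3=0$ for all vector fields, so the $\mathcal{W}-$term in (\ref{relationWMC}) drops out. The concircular flatness means $\mathcal{Z}\equiv 0$, which by the definition of the concircular curvature tensor $\mathcal{Z}$ is exactly
\begin{equation*}
R(X_1,X_2)X_3=\frac{scal}{m(m-1)}\left[g(X_2,X_3)X_1-g(X_1,X_3)X_2\right].
\end{equation*}

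Next I would substitute both facts into (\ref{relationWMC}). With $\mathcal{W}=0$ the right-hand side reduces to a combination of two scalar-curvature-weighted terms: the contribution $-\tfrac{m}{m-2}R(X_1,X_2)X_3$ carries the coefficient $-\tfrac{m}{m-2}\cdot\tfrac{scal}{m(m-1)}=-\tfrac{scal}{(m-1)(m-2)}$, while the explicit term in (\ref{relationWMC}) carries $+\tfrac{scal}{(m-1)(m-2)}$ against the same bracket $[g(X_2,X_3)X_1-g(X_1,X_3)X_2]$. These cancel identically, giving $\mathcal{C}(X_1,X_2)X_3=0$; that is, $M$ is conformal flat.

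Finally, applying the conformal flatness theorem for \NCPM s quoted above yields that $M$ is locally isometric to the Hopf manifold $S^{2q+1}(1)\times S^1$, which completes the argument. There is no genuine obstacle: the entire proof is the assembly of (\ref{relationWMC}) with the two flatness conditions and the cited theorem, and the only point requiring care is the coefficient cancellation in the substitution step, which is a one-line arithmetic check.
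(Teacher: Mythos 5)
Your proposal is correct and follows essentially the same route as the paper: it substitutes $\mathcal{W}=0$ and the concircular-flatness expression for $R$ into the identity (\ref{relationWMC}), verifies the cancellation $-\tfrac{m}{m-2}\cdot\tfrac{scal}{m(m-1)}+\tfrac{scal}{(m-1)(m-2)}=0$ to conclude $\mathcal{C}=0$, and then invokes the Bande--Blair--Hadjar conformal flatness theorem. The paper presents the same argument (phrased as an equivalence between conformal and concircular flatness under $\mathcal{M}$-projective flatness), so there is nothing to add.
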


\end{document}